\def\sideremark#1{\ifvmode\leavevmode\fi\vadjust{\vbox to0pt{\vss % the remark
      \hbox to 0pt{\hskip\hsize\hskip1em           %                will appear only
 \vbox{\hsize2cm\tiny\raggedright\pretolerance10000%                on the side
 \noindent #1\hfill}\hss}\vbox to8pt{\vfil}\vss}}} %
\newtheorem{theo}[algocf]{Theorem}
\newtheorem{prop}[algocf]{Proposition}
\newtheorem{lem}[algocf]{Lemma}
\newtheorem{co}[algocf]{Corollary}
\theoremstyle{definition}
\newtheorem{de}[algocf]{Definition}
\newtheorem{example}[algocf]{Example}
\newtheorem{re}[algocf]{Remark}
\numberwithin{algocf}{section}
\newcommand{\gp}{\mathbb{P}}
\title[On $\delta$-sequences and surfaces at infinity]{On $\delta$-sequences and surfaces at infinity}
\author{C.~Galindo \and F. Monserrat \and C.-J. Moreno-\'Avila \and J.-J.~Moyano-Fern\'andez}
\curraddr{{\bf C. Galindo and J.-J.~Moyano-Fern\'andez }: Departamento de Matem\'aticas \&
Instituto Universitario de Matem\'aticas y Aplicaciones de Castell\'on (IMAC),
Universitat Jaume I. Campus de Riu Sec, 12071 Castell\'on, Spain.
\newline {\bf F. Monserrat}:  Departament de Matemàtica Aplicada \& Institut Universitari de Matemàtica Pura i Aplicada, Universitat Politècnica de València, 46022 València, Spain.
\newline {\bf C.-J. Moreno-Ávila}:  Departamento de Matem\'aticas, Escuela Polit\'ecnica, Universidad de Extremadura, 10003, C\'aceres, Spain}
\email{galindo@mat.uji.es \hskip 0.3cm framonde@mat.upv.es \hskip 0.3cm moyano@uji.es \newline cjmoravi@unex.es}
\subjclass[2020]{Primary: 14J26, 14H20; Secondary: 32S05, 14H55.}%\edz{32S05: Local complex singularities; 14H55: Riemann surfaces; Weierstrass points; gap sequences ; 14Q10: Computational aspects of algebraic surfaces.}}
\keywords{Rational surfaces; cone of curves; $\delta$-sequences; Abhyankar-Moh theorem; curves with one place at infinity; surfaces at infinity; gluing of numerical semigroups.}
\thanks{The authors were partially funded by MICIU/AEI/10.13039/501100011033 and by “ERDF, UE”, grant PID2022-138906NB-C22. The third author was also supported by the Margarita Salas postdoctoral contract MGS/2021/14(UP2021-021) financed by the European Union-NextGenerationEU}
\begin{document}

\begin{abstract}
In most cases the semigroup at infinity $S$ of a curve $C$ with only one place at infinity is generated by a $\delta$-sequence. This sequence provides geometrical information on $C$ such as the dual graph of the resolution of the singularity of $C$ at infinity. Since different $\delta$-sequences can generate the same semigroup, it is an interesting problem to know the geometrical behaviour of curves $C$ sharing the same semigroup $S$. An analogous problem arises in a more general context when considering
surfaces at infinity and their $\delta$-semigroups.

We show how to construct $\delta$-sequences, and how to obtain different families that gene\-rate the same semigroup $S$, allowing us to study the geometrical content encoded by $S$.
\end{abstract}

\maketitle

\section{Introduction}

It is well-known that the projective plane $\mathbb{P}^2$ and the Hirzebruch surfaces $\mathbb{F}_{\sigma}$, $\sigma \neq 1$, over a field $k$, are the relatively minimal models for rational surfaces. Furthermore, any rational surface $X$ can be obtained by blowing-up either $\mathbb{P}^2$  or $\mathbb{F}_{\sigma}$ at a configuration $\mathcal{C}$ of infinitely near points. A particularly interesting family of rational surfaces are those obtained by blowing-up at a simple configuration over $\mathbb{P}^2$ or $\mathbb{F}_{\sigma}$; we call these surfaces simple. A configuration $\{p_i\}_{i=1}^n$ of infinitely near points over a smooth projective surface $X_0$ is \emph{simple} if $p_1 = p \in X_0$ and $p_i$, $i >1$, belongs to the exceptional divisor created by blowing-up at $p_{i-1}$. Thus, a \emph{simple rational surface} $X$ can be obtained from a finite sequence of point blowups as follows:
\begin{equation}\label{finiteseq}
 X = X_{n} \rightarrow \cdots \rightarrow X_{i+1}\rightarrow X_i\rightarrow \cdots
\rightarrow X_1 \rightarrow X_0,
\end{equation}
where $X_0$ is either $\mathbb{P}^2$ or $\mathbb{F}_{\sigma}$ and $\pi_{i+1}$ the blowup of $X_i$ at $p_{i+1}$, $0 \leq i \leq n-1$.
%\medskip

A main advantage of simple surfaces is that the sequence (\ref{finiteseq}) defines and  is defined by a divisorial valuation $\nu$ of (the function field of) $X_0$ \cite{Spiv}. The points $p_i$ are the centers of the valuation. In recent years there has been significant activity in the study of plane valuations and their Poincar\'e series (see, for instance \cite{MR4710207-cam1, MR3695804-cam2, MR3505134.cam3}).
%\medskip

Divisorial valuations as above admit a large subfamily which is formed by the so-called non-positive at infinity (divisorial) valuations. These valuations allow us to explicitly des\-cribe, within their context, several interesting geometrical tools that are not well-known in general. This is the case for the Seshadri-type constants \cite{GalMon, MR4650432-gal}, which were introduced in \cite{CutEinLaz}, and the Newton-Okounkov bodies \cite{GalMonMoyNic2, MR4650432-gal}, introduced in \cite{Oko3} and independently developed in \cite{LazMus, KavKho}. Furthermore, non-positive at infinity valuations are useful for providing evidence for the Nagata conjecture \cite{GalMonMoy}.

%\medskip
Surfaces $X$ defined by non-positive at infinity valuations posses remarkable geometrical properties. One of the most significant is that {\it the cone of curves \rm{NE}$(X)$ of $X$ is (finite) polyhedral and minimally generated}. Conversely, any simple rational surface with this last property arises from a sequence as in (\ref{finiteseq}) given by a non-positive at infinity valuation (see \cite{GalMon, GalMonMor}). The cone of curves of a variety is an important tool in global geometry and the Minimal Model Program. Yet it remains largely unknown even in the two-dimensional case \cite{Laz1}.
%\medskip

A \emph{non-positive at infinity valuation} $\nu$ is characterized by the fact that $\nu(f) \leq 0$ for all $f \in \mathcal{O}(Y) \setminus\{0\}$, $Y$ being $\mathbb{P}^2\setminus L$ (respectively, $\mathbb{F}_{\sigma}\setminus \{F,M_0\}$) whenever $X_0 =\mathbb{P}^2$ (respectively, $X_0 = \mathbb{F}_{\sigma}$). Here, $L$ (respectively, $F$ and $M_0$) is the line at infinity on $\mathbb{P}^2$ (respectively, are a fibre and the special section on $\mathbb{F}_{\sigma}$). Accordingly, we define the {\it semigroup at infinity of a non-positive at infinity valuation $\nu$} as
\begin{equation}
\label{SINF}
S_{\nu,\infty} := \left\{ -\nu (f) : f \in \mathcal{O}(Y)\setminus\{0\} \right\}.
\end{equation}
Therefore, a \emph{natural question} is to study simple surfaces with minimally generated polyhedral cone of curves attending its semigroup at infinity.

The literature does not currently provide information on the generators of $S_{\nu,\infty}$; however, the concept of $\delta$-sequence and that of curve with only one place at infinity are useful when  restricting  to a specific class of non-positive at infinity valuations, which give rise to what we call {\it surfaces at infinity}. In this paper, we address the aforementioned natural question  for this last class of surfaces; our findings suggests that the natural question in its entirety remains a significant challenge.
%\medskip

A curve $C$ in $\mathbb{P}^2$ with only one place at infinity (see Subsection \ref{sect:curves} for the definition) determines a semigroup at infinity $S_{C,\infty}$ (see Definition \ref{def:semigroupatinfinity}). This semigroup was studied by Abhyankar and Moh, who proved that, except for certain cases of the characteristic of $k$, it is generated by a $\delta$-sequence, see Definition \ref{delta} and Theorem \ref{theo:gene}. A surface at infinity is a simple surface $X$ defined by a non-positive at infinity valuation whose centers constitute a configuration $\mathcal{C}=\{p_i\}_{i=1}^n$ of infinitely near points satisfying the following properties: The point $p_1 \in X_0= \mathbb{P}^2$ is the point at the line at infinity of a curve $C$ on $\mathbb{P}^2$ with only one place at infinity such that $S_{C,\infty}$ is generated by a $\delta$-sequence whose first element is the degree of $C$, see Proposition \ref{siete+}. Moreover the remaining points $p_i$, $i>1$, are infinitely near points on the unique branch at $p_1$ defined by $C$. We also assume that $n \geq m$, where $m$ is the positive integer such that, with the notation as in (\ref{finiteseq}), the composition $\pi_m \circ \cdots \circ \pi_1: X_m \rightarrow \mathbb{P}^2$ gives rise to the minimal embedded resolution of $C$ at $p_1$.  In the characteristic zero case, surfaces at infinity were studied in \cite{CamPilReg}.
%\medskip

A surface at infinity $X$ is defined by a valuation $\nu$ of $\gp^2$ (called valuation at infinity) that is non-positive at infinity. Consequently, NE$(X)$ is generated by the classes in Pic$_{\mathbb{R}}(X):=\mathrm{Pic}(X) \otimes \mathbb{R}$ of the divisors $\tilde{L}$, $E_1, \ldots, E_n$, where $\tilde{L}$ (respectively, $E_i$, $1 \leq i \leq n$,) is (respectively, are) the strict transform (respectively, strict transforms) on $X$ of $L$, (respectively, the exceptional divisors given by (\ref{finiteseq})), see also \cite{CamPilReg}. Throughout the paper, we refer to the classes  $[\tilde{L}]$ and $[E_i]$ in Pic$_{\mathbb{R}}(X)$  as the generators of NE$(X)$. The relative position of these divisors and the intersection products of their classes can be deduced from an (ordered) sequence of generators of the semigroup at infinity of $\nu$, referred to as {\it $\delta$-sequence of type A,} $\Delta_A$, (see page \pageref{tipoA} for the definition). This sequence consists of a $\delta$-sequence $\Delta = \{\delta_0, \ldots, \delta_g\}$ ---which generates $S_{C,\infty}$ and  is called the $\delta$-sequence of $X$--- plus a non-negative integer $\delta_{g+1}$. The dual graph of $\nu$ schematically depicts the relative positions of the exceptional divisors. This graph, along with the number $\eta_X$ of points in $\mathcal{C}$ through which (the strict transforms of) $L$ passes, allows us to compute the intersection products of the generators of $\mathrm{NE}(X)$, see Example \ref{ex:new}. The sequence $\Delta_A$ determines the dual graph $\Gamma_\nu$ of $\nu$ and $\eta_X$ can be derived from $\delta_0$ and $\Gamma_\nu$; the $\delta$-sequence of type A completely determines the intersection matrix of the generating classes  of $\mathrm{NE}(X)$. The sequence $\Delta$ contains the most relevant information, as $\delta_{g+1}$ simply indicates the number of free points in the corresponding configuration $\mathcal{C}$ appearing after the embedded resolution of the singularity of $C$ at the point at infinity \cite{GalMonCod1}. The semigroup of $\mathbb{Z}_{\geq 0}$ generated by $\Delta$ is named the \emph{$\delta$-semigroup of $X$} and is denoted by $S_\Delta$.
%\medskip

For both curves with only one place at infinity $C$ and surfaces at infinity $X$, there is a $\delta$-sequence generating $S_{C,\infty}$ and $S_\Delta$. %This $\delta$-sequence provides geometrical information such as the dual graph of $C$ at $p$ or the relevant subgraph of the dual graph of $\nu$.
A $\delta$-sequence is not necessarily a minimal set of gene\-rators of the semigroups $S_{C,\infty}$ and $S_\Delta$, represented by $S$,
and its specific ordering is essential for recovering the underlying geometry. Consequently, the above semigroups $S$ encode significant information making it worthwhile to investigate the different $\delta$-sequences gene\-rating $S$.

Within the above geometrical framework, this paper provides an arithmetic study on the construction of $\delta$-sequences and how to derive different $\delta$-sequences generating the same semigroup. Let us be more explicit.

%One can check whether a (finite) sequence of positive integers is a $\delta$-sequence from the definition. However the definition does not explain how to get directly $\delta$-sequences.
While the definition of $\delta$-sequence is descriptive, it does not provide a direct method of construction.
\emph{Section \ref{section:gluing}} is devoted to establishing a constructive approach. Theorem \ref{thm:gluing_delta} \emph{gives an easy-to-apply procedure}, supported in previous results on semigroups, \emph{that allows for giving any $\delta$-sequence} (Corollary \ref{el35}).
%\medskip

Let $\mathcal{S}$ be the family of surfaces at infinity sharing the same prefixed $\delta$-semigroup. For each $X \in \mathcal{S}$, $\Delta_X$ denotes its $\delta$-sequence. Given $X, X' \in \mathcal{S}$, we say that $X$ is less intricate than $X'$ whenever $\# \Delta_X < \# \Delta_{X'}$; where $\# $ denotes cardinality. This concept gives a way to organize surfaces at infinity with the same $\delta$-semigroup. An analogous definition and procedure can be formulated for the set of curves with only one place at infinity sharing the same semigroup at infinity.

%In our Example \ref{ex:new}, we give surfaces $X$ and $Y$ in the same semigroup $\mathcal{S}$ such that $X$ is less intricate than $Y$.
Let $S$ be a semigroup generated by a $\delta$-sequence $\Delta$. Section \ref{Sec:minimal_sec} investigates \emph{how to obtain shorter $\delta$-sequences generating $S$}, should they exist;  \emph{Theorem \ref{prop:remove_delta}} is our main result in this direction, thereby allowing us obtain surfaces at infinity (or  curves with only one place at infinity) less intricate than one initially considered while preserving the same semigroup. In this context, we introduce and study the notions of MG$\delta$\emph{-sequence} and \emph{primitive} $\delta$-sequence.

Section \ref{sec:Nested_delta_sequences} considers the inverse problem. Starting with a $\delta$-sequence $\Delta$ we seek for refinements ($\delta$-sequences which add elements to $\Delta$) which generate the same semigroup. Geometrically, this corresponds to surfaces at infinity (or  curves with only one place at infinity) that are more intricate than the surface associated with $\Delta$, yet share the same semigroup. Our main results here are \emph{Theorem \ref{thm:introducir_beta}}, which characterizes refinements of order one, and \emph{Corollary \ref{t23}} that shows the existence of finitely many refinements keeping the same $\delta$-semigroup provided that one excludes multiples of the first element of the $\delta$-sequence. Finally, in \emph{Example \ref{ejemplo57}}, we begin with an MG$\delta$-sequence $\Delta_1$ (generating a $\delta$-semigroup $S$) and construct a family of its refinements. As previously noted, each resulting $\delta$-sequence determines the behavior of the generators of the cone of curves of the corresponding surfaces at infinity in the class $\mathcal{S}$ associated with $S$.
\medskip

Along the paper $\mathbb{N}$ stands for the set of non-negative integers. For $n\in\mathbb{N}$, $\mathbb{N}_{>n}$ is the infinite set $\{n+1,n+2,\ldots\}\subseteq \mathbb{N}$. Moreover, $[n]:=\{1,\ldots , n\}$ and $[0,n]:=\{0,1,\ldots , n\}$.

\section{Global geometry and $\delta$-sequences} \label{sect2a}

We are interested in the notion of $\delta$-sequence and its influence on geometric aspects related to surfaces and curves. We devote this section to introduce the above concepts.

\subsection{Curves with only one place at infinity and $\delta$-sequences}\label{sect:curves}
Let $k$ be a field and denote by $\overline{k}$ its algebraic closure. Set $\gp^2=\gp_k^2$ the projective plane over $k$.
Let $L$ be the line at infinity in the compactification of the affine plane $\mathbb{A}_k^2$ to $\gp^2$. An absolutely irreducible curve $C$ in $\gp^2$ (i.e.,
irreducible as a curve in $\gp^2_{\overline{k}}$) is said to {\it have only one place at infinity} if the intersection $C\cap L$ is a single point $p$,  $C$ has only one  branch at $p$, and, both $p$ and the branch, are rational (that is, defined over $k$). The point $p$ is then said to be the point at infinity of $C$.
\medskip

Abhyankar and Moh introduced the semigroup at infinity of a curve $C$ with only one place at infinity. They proved that a convenient system of generators of this semigroup describes the geometry of $C$ around the point at infinity. Let $F$ be the quotient field of the local ring $\mathcal{O}_{C,p}$ and consider the discrete valuation $\nu_{C,p}$  defined by $C$ on $F$. Next, we define the semigroup at infinity and the value semigroup of $C$ at $p$.

\begin{de}\label{def:semigroupatinfinity}
Let $C$ be a curve with only one place at infinity. Set $C\cap L = \{p\}$. The {\it semigroup at infinity} of $C$ is the additive sub-semigroup:
\[
S_{C,\infty}:= \left\{ - \nu_{C,p} (h) : h \in \mathcal{O}_C(C\setminus\{p\})\setminus\{0\} \right\}.
\]
\medskip
Moreover, the value semigroup of $C$ at $p$ is
$$
S_{C,p}=\{\nu_{C,p}(h) : h \in  \mathcal{O}_{C,p} \setminus \{0\}\}.
$$
\end{de}
\medskip
These semigroups are numerical, i.e.~additive submonoids of the mo\-noid $(\mathbb{N},+)$ with finite complement. The elements in the complement are called gaps of the semigroup.
\medskip

Let $m$ be a non-negative integer. In this paper, an $(m+1)$-tuple of positive integers $\Lambda =(r_0,\ldots, r_m)$ is referred to as a {\it sequence} in $\mathbb{N}_{> 0}$. Define the values
$$
d_i(\Lambda):=\mathrm{gcd}(r_0,\ldots, r_{i-1}), \ \ i\in [m+1].
$$
The corresponding sequence $(d_i(\Lambda))_{i=1}^{m+1}$ is called its $d$-sequence. Note that $d_i(\Lambda)>0$ for every $i\in [m+1]$. Since $d_{i+1}(\Lambda)$ divides $d_i(\Lambda)$, we set
$$
n_i(\Lambda):=\frac{d_i(\Lambda)}{d_{i+1}(\Lambda)} \in \mathbb{N}_{>0} \ \ \ \mbox{for every} \ i\in [m].
$$
The sequence $(n_i(\Lambda))_{i=1}^{m}$ is called the $n$-sequence  of $\Lambda$; it is customary to append $n_0(\Lambda):=1$. We will drop the notation $d_i(\Lambda), n_i(\Lambda)$ in favor of $d_i, n_i$ when the sequence $\Lambda$ is clear from the context.
\medskip

Now we introduce the notion of $\delta$-sequence (in $\mathbb{N}_{>0}$). It is a key concept in this paper.

\begin{de} \label{delta}
Let  $\Delta = ( \delta_i )_{i=0}^g$ be a sequence in $\mathbb{N}_{>0}$, $g>0$. We say that $\Delta$ is a {\it $\delta$-sequence in} $\mathbb{N}_{>0}$ (or simply a $\delta$-sequence) if the following conditions hold:
\begin{itemize}
\item[(a)] $d_{g+1}=1$ and $n_i>1$ for every $i \in [g]$.

\item[(b)] For $i\in [g]$, the integer $n_i\delta_i$ belongs to the semigroup $\mathbb{N}\delta_0+\mathbb{N}\delta_1+\cdots + \mathbb{N}\delta_{i-1}$ $=: \langle \delta_0, \ldots, \delta_{i-1} \rangle$.

\item[(c)] $\delta_i< n_{i-1} \delta_{i-1} $
for every $i\in [g]$.
\end{itemize}
\end{de}

\begin{re}
For convenience, $T=(1)$ is also considered a $\delta$-sequence, its $n$-sequence is $(n_0)=(1)$.
\end{re}

\begin{re}\vspace{1mm}
The notion of $\delta$-sequence was introduced in the literature with slight modifications. Sathaye and Stenerson did not require any condition for $\delta_0$ and $\delta_1$, whereas we do in Item (c), see \cite{Sat,SatSte}. The concept given in Definition \ref{delta} was called a \emph{characteristic} $\delta$-sequence by Sathaye; his terminology aligns with that given by Abhyankar \cite{Abh3} and Seidenberg \cite{Sei}. Suzuki and Fujimoto \cite{FujSuz, Suz} required the inequality $\delta_0<\delta_1$. Assi and Garc\'ia-S\'anchez \cite{AssiGar} added the restriction $\delta_1>d_2$ instead of $\delta_0>\delta_1$. In this paper, we use Definition \ref{delta} which was previously considered in \cite{GalMonCod2,GalMonCod1} (see also Pinkham \cite{Pin}, Reguera \cite{Reg}).

 Notice that Condition (b) in Definition \ref{delta} means that $\delta$-sequences are telescopic sequences in the sense of Shor \cite{Shor}. This corresponds to the notion of freeness introduced by Bertin and Carbonne \cite{beca77}.
\end{re}

Let us fix homogeneous coordinates $(X:Y:Z)$ on $\mathbb{P}^2$. Assume that the equation $Z=0$ gives the line at infinity and let $p=(1:0:0)$ be the point at infinity. Set $(x,y)$ coordinates in the chart $Z\neq 0$ and $(u=y/x, v=1/x)$ coordinates around the point at infinity. We can assume that the curve $C$ is defined by a monic polynomial $f(x,y)\in k[x]\![y]$ in the indeterminate $y$ with coefficients in $k[x]$.
\medskip

Abhyankar and Moh proved in \cite{AbhMoh} that, under suitable conditions on the characteristic of the supporting field, the semigroup $S_{C,\infty}$ is generated by a $\delta$-sequence:

\begin{theo} \label{theo:gene}
\label{21} Let $C$ be a curve having only one place at infinity. Assume that ${\rm char}(k)$ does not divide
$\gcd(-\nu_{C,p}(x),-\nu_{C,p}(y))$. Then, there exists a $\delta$-sequence that generates the semigroup $S_{C,\infty}$.
\end{theo}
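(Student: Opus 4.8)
The plan is to reconstruct the Abhyankar–Moh argument via successive Tschirnhausen-type approximate roots of the defining polynomial $f(x,y)$, tracking how the values $-\nu_{C,p}$ of these approximate roots produce the desired $\delta$-sequence. First I would set up the basic numerical data: put $\delta_0 := -\nu_{C,p}(x)$ and $\delta_1 := -\nu_{C,p}(y)$, which are the degree of $f$ in $y$ and the ``degree at infinity'' of the curve, respectively. The hypothesis that $\mathrm{char}(k)$ does not divide $\gcd(\delta_0,\delta_1)$ is exactly what is needed to run the Newton–Puiseux expansion of $f$ at the place at infinity: after a ramified change of parameter of index $\delta_0$ (prime to the characteristic by hypothesis, at least at the first stage), one obtains a Puiseux series solution $y = y(x^{1/\delta_0})$, and one reads off the characteristic exponents. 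From these characteristic exponents one forms, in the classical way, the candidate generators $\delta_0,\delta_1,\ldots,\delta_g$ together with the sequence $d_i = \gcd(\delta_0,\ldots,\delta_{i-1})$ and $n_i = d_i/d_{i+1}$; the place being rational and unibranch forces $d_{g+1}=1$, giving part of condition (1).

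The core of the argument is an induction on $i$ that simultaneously (a) constructs a monic polynomial $g_i(x,y)\in k[x]\![y]$ of $y$-degree $\delta_0/d_{i+1}$ — the $i$-th approximate root of $f$ — with $-\nu_{C,p}(g_i)=\delta_{i+1}$, and (b) verifies the three defining properties of a $\delta$-sequence for the truncated tuple $(\delta_0,\ldots,\delta_{i+1})$. At each step one uses the key semigroup fact that $n_i\delta_i$ can be expanded in terms of $\delta_0,\ldots,\delta_{i-1}$: concretely, $g_i^{n_i}$ and the appropriate monomial $x^{a_0}g_1^{a_1}\cdots g_{i-1}^{a_{i-1}}$ have the same value $n_i\delta_i$ at infinity, and the difference has strictly smaller degree and strictly larger value, which is precisely how the next approximate root $g_{i+1}$ and the next generator $\delta_{i+1}$ are produced. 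This yields condition (2) essentially by construction. Condition (3), the inequality $\delta_{i+1} < n_i\delta_i$, comes from a degree/value count: the value of $g_{i+1}$ is bounded by the $y$-degree of $f$ times the relevant quantity, and here the Abhyankar–Moh semigroup inequality (their ``$\delta_i < \min\{$semigroup bound$\}$'') is invoked. The fact that $n_i>1$ for all $i\in[g]$ is automatic from the definition of the characteristic exponents, since each new exponent genuinely enlarges the relevant gcd-lattice. Finally, one must check that the $\langle \delta_0,\ldots,\delta_g\rangle$ so produced equals $S_{C,\infty}$ and not merely a subsemigroup: here one uses that the conductor/genus computed from $(\delta_i,n_i)$ matches the arithmetic genus contribution of the place at infinity, or equivalently that $\{g_0=x,g_1=y,g_2,\ldots,g_g\}$ together generate $\mathcal{O}_C(C\setminus\{p\})$ as a $k$-algebra after the Abhyankar–Moh transformations.

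The main obstacle — and the only place the characteristic hypothesis is genuinely used — is guaranteeing that the Newton–Puiseux / approximate-root machinery does not degenerate: in characteristic $p>0$ one must ensure that at each stage the ramification index dividing the construction is prime to $p$, so that the formal solution exists and the ``Tschirnhausen completion'' (killing the subleading term of $g_i$ to read off $\delta_{i+1}$) can actually be carried out. The hypothesis $p \nmid \gcd(-\nu_{C,p}(x),-\nu_{C,p}(y)) = d_2$ controls the first, most delicate ramification; the fact that each subsequent $d_{i+1}$ divides $d_2$ then propagates the non-divisibility downward, so the induction goes through. I would structure the write-up so that this characteristic check is isolated into a single lemma invoked at the inductive step, with the rest of the argument being the (characteristic-free) formal manipulation of values and degrees; the bulk of the verification of conditions (1)–(3) is then bookkeeping that I would cite from \cite{AbhMoh} (and the expository treatments in \cite{Sat,SatSte}) rather than reproduce in full.
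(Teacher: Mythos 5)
The paper does not prove this theorem: it is stated as a known result of Abhyankar and Moh and simply cited from \cite{AbhMoh} (with the Sathaye--Stenerson converse cited separately). Your outline is a faithful reconstruction of that classical argument --- approximate roots, the values $-\nu_{C,p}(q_i)=\delta_i$, and the verification of conditions (1)--(3) of Definition \ref{delta} --- so in substance you are taking the same route the paper does, just unpacking the citation. One caution: your opening step, ``a ramified change of parameter of index $\delta_0$ (prime to the characteristic by hypothesis),'' overstates what the hypothesis gives you. The assumption is only that $\mathrm{char}(k)$ does not divide $\gcd(-\nu_{C,p}(x),-\nu_{C,p}(y))=d_2$, not that it fails to divide $\delta_0$ itself, so a genuine Newton--Puiseux expansion in $x^{1/\delta_0}$ need not exist in positive characteristic. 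This is precisely why Abhyankar--Moh run the induction through approximate roots and the generalized Tschirnhausen transformation rather than through Puiseux series; your later paragraphs do rely on approximate roots and on the correct observation that each $d_{i+1}$ divides $d_2$ (so the non-divisibility propagates), which is where the hypothesis is actually used, but the Puiseux-series framing of the first stage should be dropped or corrected in a final write-up.
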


Note that, for the sake of simplicity, for $h \in k[x,y]$, we write $\nu_{C,p} (h)$ instead of $\nu_{C,p} (\dot h)$, where $\dot h$ is the class in the field $F$ defined by $h$. In addition, when $p$ is not a singular point of the curve $C$, we get the $\delta$-sequence $T$.
\medskip

A converse of Theorem \ref{theo:gene}--due to Sathaye and Stenerson \cite[\S~3.2]{SatSte}--is also true, see our reformulation in Proposition \ref{siete+}. This requires to introduce a family of polynomials related to a $\delta$-sequence $\Delta$. Suppose $\Delta \neq T$, set $q_0:=x$, $q_1:=y$ and, for every $i\in [g]$,
\begin{equation}\label{misq}
q_{i+1}:= q_{i}^{n_{i}} - \lambda_i \left( \prod_{j=0}^{i-1} q_j^{a_{ij}}\right)
\end{equation}
for some arbitrary $\lambda_i \in k \setminus \{0\}$, where $n_i \delta_i = \sum_{j=0}^{i-1} a_{ij} \delta_j$ --\cite[Definition 2.4]{GalMonCod2}--. These polynomials lead to an explicit construction of a curve $C$ with only one place at infinity:

\begin{prop}[Sathaye-Stenerson] \label{siete+}
Let $\Delta \neq T$ be a $\delta$-sequence. Then, the equation $q_{g+1}=0$ defines a plane curve $C$ with only one place at infinity of degree $\delta_0$ such that $\Delta$ generates $S_{C,\infty}$ (because $\delta_0 > \delta_1$, \cite{Reg}). Moreover, the set $\{q_i\}_{i=0}^{g}$ is a family of approximates for
$C$ such that $- \nu_{C,p} (q_i)=\delta_i\;$ for every $i\in [0,g]$ (see \cite[Definition 2.3]{GalMonCod1} for the concept of approximate).
\end{prop}

Puiseux pairs are one of the classical invariants in the study of plane curve singularities, see e.g.~Zariski \cite{ZariskiModuli}. The following terminology is useful for introducing them in the case of curves with only one place at infinity.

\begin{de}
\label{noprincipal}
A $\delta$-sequence $\Delta=(\delta_i)_{i=0}^g \neq T$ is called \emph{non-principal} if $\delta_1$ does not divide $\delta_0$; otherwise it is called \emph{principal}. In addition, $\Delta$ is called \emph{non-degenerate} (respectively, \emph{degenerate}) if $\delta_0-\delta_1$ does not divide $\delta_0$ (respectively, divides $\delta_0$).
\end{de}

The concepts of non-principal and principal sequences are due to Sathaye, see e.g.~\cite{Sat}. The
distinction between non-degenerate and degenerate $\delta$-sequence helps to explain the arithmetic relation between the $\delta$-sequence of a curve $C$ with only one place at the infinity and its Puiseux pairs at the singularity $p$.
\medskip

The sequence of {\it Puiseux pairs} of a $\delta$-sequence $\Delta=(\delta_i)_{i=0}^g$ is the sequence of pairs of non-negative integers $\Big( (m_i,e_i) \Big)_{i = 0}^r$ defined as follows:

\noindent \textbf{(I)} If $\Delta$ is non-degenerate, we have $r=g-1$ and
$$
\begin{array}{lccl}
e_0:=\delta_0-\delta_1, & & &   m_0:=\delta_0, \\
e_i:=d_{i+1}, & & & m_i:=n_i\delta_i-\delta_{i+1}, \ \ \ \mbox{for every } i\in[g-1].
\end{array}
$$

\noindent \textbf{(II)} If $\Delta$ is degenerate, we have $r=g-2$ and
$$
\begin{array}{lccl}
e_0:=d_2=\delta_0-\delta_1, & & & m_0:=\delta_0+n_1\delta_1-\delta_2,\\
e_i:=d_{i+2},& & & m_i:=n_{i+1}\delta_{i+1}-\delta_{i+2}, \ \ \ \mbox{for every } i\in[g-2].
\end{array}
$$

\medskip

The {\it Puiseux exponents} $\{\beta'_i\}_{i=0}^r$ are also a well-known invariant of analytically irreducible plane curve germs  \cite{Cam}. They determine, and are determined by, the {\it dual graph} of the singularity of the germ. The dual graph can be regarded as a combinatorial description of the associated resolution process of the singularity. It shows the relative position of the exceptional divisors appearing in that process. The dual graph is a tree where each vertex represents the strict transform of an exceptional divisor and two vertices are joined whenever the corresponding divisors meet \cite[Figure 1]{GalMonCod2}. In our case, $\Delta$ determines the Puiseux pairs and therefore the dual graph of the resolution of $C$ at $p$. In fact, $\beta'_0 = 1$, $\beta'_1 = m_0/e_0$, and
\begin{equation}\label{z}
\beta'_i=1+ \frac{m_{i-1}}{e_{i-1}}%= 1-k_{i-1}+ h_{s_{i-1}}  +
%\frac{1}{{h_{s_{i-1}+1}  + _{ \ddots + \frac{1}{{h_{s_i-1}  +
%\frac{1}{{k_i }}}}} }},
\end{equation}
for $2 \leq i \leq r$, cf.~\cite[eq.~(2.4), p.~720]{GalMonCod2}.

\subsection{Global geometry associated to $\delta$-sequences}\label{subsec:Coxring}

Let $X$ be a simple rational projective surface obtained by a composition of blowups as in \eqref{finiteseq}, where $X_0 = \mathbb{P}^2$. Denote by $\text{Pic}(X)$ the Picard group of $X$ and by $\text{Pic}_\mathbb{R}(X)$  the corresponding vector space $\text{Pic}(X)\otimes\mathbb{R}$ over the field of real numbers. Set $E_0$ a general projective line on $\gp^2$ and $\{E_i\}_{i=1}^n$ (respectively, $\{E_i^*\}_{i=1}^n$) the strict (respectively, total) transforms on the surface $X$ of the exceptional divisors $E_i$  created by the blowups $\pi_i$. For simplicity, $E_i$ also denotes the strict transform of $E_i$ on any surface $X_j$, $j>i.$ For $i\in[n]$, let $[E_i]$ and $[E_i^*]$ denote the classes of these divisors  modulo linear (or numerical) equivalence in $\text{Pic}_\mathbb{R}(X)$. It turns out that the sets $\{[E_i]\}_{i=0}^n$ and $\{[E_i^*]\}_{i=0}^n$ form two bases for the $\mathbb{R}$-vector space $\text{Pic}_\mathbb{R}(X)$, and the equalities $E_i=E_i^*-\sum_{p_j\to p_i}E_j^*$, $i > 0$, provide a  change of basis for  the space of divisors with exceptional support; here the arrow $\rightarrow$ means proximity, a concept defined below in Definition \ref{INF}.
\medskip

The sequence of blowups in \eqref{finiteseq} is linked to a divisorial valuation $\nu$ of $\gp^2$.  We write $\varphi_H$ for the germ at $p$ of a curve $H$ on $\mathbb{P}^2$. Moreover, for every $i\in [n]$, let $\varphi_i$ be an analytically irreducible germ of curve at $p$ whose strict transform on $X_i$ is transversal to $E_i$ at a non-singular point of the exceptional locus. We will also stand $(\phi,\varphi)_p$ for the intersection multiplicity at $p$ of two germs of curve at $p$, $\phi$ and $\varphi$.
\medskip

When considering a particular class of divisorial valuations, $X$ has a nice geometrical structure. Let us introduce these valuations (cf. \cite{GalMonMor3}). Let $L$ be as defined in Subsection \ref{sect:curves}.

\begin{de}
    A divisorial valuation $\nu$ of $\mathbb{P}^2$ is said to be non-positive at infinity if $\nu(f)\leq 0$ for every $f\in \mathcal{O}_{\mathbb{P}^2}(\mathbb{P}^2 \setminus L)\setminus\{0\}$.
\end{de}

The following theorem describes some properties of the surfaces defined by non-positive at infinity valuations \cite[Theorem 1]{GalMon}. Recall that NE$(X)$ denotes the cone of curves of $X$, i.e.~the convex cone in $\text{Pic}_\mathbb{R}(X)$ generated by the classes of irreducible curves on $X$.
\medskip

\begin{theo}\label{advances}
Let $\nu$ be a divisorial valuation of $\mathbb{P}^2$. The following conditions are equivalent:
\begin{enumerate}
    \item[(a)] The valuation $\nu$ is non-positive at infinity.
    \item[(b)]  The cone {\rm NE}$(X)$ is generated by the classes in $\text{\emph{Pic}}_\mathbb{R}(X)$ of the divisors $\tilde{L}$, $E_1,\ldots,$ $E_n$, where $\tilde{L}$ is the strict transform on $X$ of the line at infinity.
    \item[(c)] The divisor
    $$
(\varphi_L,\varphi_n)_p E_0^{*} -\sum_{j=1}^{n} \mathrm{mult}_{p_j} (\varphi_n) E_j^{*},
$$
is nef and effective.
\end{enumerate}
\end{theo}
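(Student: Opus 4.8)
The plan is to prove the three equivalences by the cycle $(3)\Rightarrow(1)$, $(2)\Rightarrow(1)$, $(1)\Rightarrow(3)$, $(3)\Rightarrow(2)$, the common engine being an intersection‑theoretic dictionary on $X$. Keeping the coordinates of Subsection~\ref{sect:curves}, write $D:=(\varphi_L,\varphi_n)_p E_0^{*}-\sum_{j=1}^{n}\mathrm{mult}_{p_j}(\varphi_n)E_j^{*}$ for the divisor in (3) and put $d_0:=(\varphi_L,\varphi_n)_p$. For a curve $H\subset\gp^2$ of degree $d$ not containing $L$, the class of its strict transform is $[\widetilde H]=d[E_0^*]-\sum_{j}\mathrm{mult}_{p_j}(\varphi_H)[E_j^*]$, while Noether's formula for the divisorial valuation gives $\nu(\varphi_H)=(\varphi_H,\varphi_n)_p=\sum_j\mathrm{mult}_{p_j}(\varphi_H)\mathrm{mult}_{p_j}(\varphi_n)$; in particular $\nu(\varphi_L)=d_0$. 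Using $(E_0^*)^2=1$, $E_i^*\cdot E_j^*=-\delta_{ij}$, $E_0^*\cdot E_i^*=0$ together with $E_i=E_i^*-\sum_{p_j\to p_i}E_j^*$ and $\widetilde L=E_0^*-\sum_{p_j\preceq\varphi_L}E_j^*$, a short computation yields
\[
D\cdot[\widetilde H]=d\,d_0-\nu(\varphi_H),\qquad D\cdot[\widetilde L]=0,\qquad D\cdot[E_i]=\mathrm{mult}_{p_i}(\varphi_n)-\!\!\sum_{p_j\to p_i}\!\!\mathrm{mult}_{p_j}(\varphi_n)\ \ge\ 0,
\]
the last inequality being the proximity inequality for $\varphi_n$. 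Finally, writing $f\in\mathcal{O}_{\gp^2}(\gp^2\setminus L)$ as $f=g/Z^d$ with $g$ homogeneous of degree $d$ and $Z\nmid g$, and setting $H=\{g=0\}$, one obtains $\nu(f)=\nu(\varphi_H)-d\,d_0=-\,D\cdot[\widetilde H]$.

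From this, $(3)\Rightarrow(1)$ and $(2)\Rightarrow(1)$ are formal. If $D$ is nef, then $D\cdot[\widetilde H]\ge 0$ since $\widetilde H$ is effective, so $\nu(f)\le 0$; and if $\mathrm{NE}(X)$ equals $\sigma:=\R_{\ge0}[\widetilde L]+\sum_{i=1}^n\R_{\ge0}[E_i]$, then $[\widetilde H]=a[\widetilde L]+\sum_i b_i[E_i]$ with $a,b_i\ge 0$, whence $D\cdot[\widetilde H]=\sum_i b_i(D\cdot[E_i])\ge 0$ and again $\nu(f)\le 0$. For $(1)\Rightarrow(3)$, nefness of $D$ is checked on an arbitrary irreducible curve $\Gamma$: if $\Gamma$ is contracted by $X\to\gp^2$ it equals some $E_i$ and $D\cdot E_i\ge 0$; if $\Gamma=\widetilde L$ then $D\cdot\widetilde L=0$; otherwise $\Gamma=\widetilde C$ for an irreducible plane curve $C\ne L$ and $D\cdot[\widetilde C]=-\nu(g/Z^{\deg C})\ge 0$ by hypothesis. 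Effectivity of $D$ is the more delicate half: here one uses the structure theory of non‑positive‑at‑infinity valuations, which, via the correspondence with curves having only one place at infinity discussed in Subsection~\ref{sect:curves}, furnishes a plane curve $C_0$ of degree $d_0$ with only one place at infinity whose branch at $p$ is $\varphi_n$; by Bézout $C_0$ meets $L$ only at $p$, hence $[\widetilde{C_0}]=D$ and $D$ is the class of an (irreducible, hence effective) curve.

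It remains to prove $(3)\Rightarrow(2)$. As $\widetilde L,E_1,\dots,E_n$ are irreducible curves, $\sigma\subseteq\mathrm{NE}(X)$, and since $\sigma$ is closed it suffices to show $[\Gamma]\in\sigma$ for every irreducible $\Gamma$; only $\Gamma=\widetilde C$ with $C$ irreducible of degree $d\ge 1$, $C\ne L$, needs attention. The classes $[\widetilde L],[E_1],\dots,[E_n]$ form a basis of $\mathrm{Pic}_\R(X)$ (pair with $E_0^*$ to isolate $[\widetilde L]$, then invert the proximity matrix), so $[\widetilde C]=d[\widetilde L]+\sum_i b_i[E_i]$: pairing with $E_0^*$ shows the $[\widetilde L]$‑coefficient equals $d\ge 1>0$, and pairing successively with $E_1^*,\dots,E_n^*$ gives the triangular recursion $b_l=d\,\mathrm{mult}_{p_l}(\varphi_L)-\mathrm{mult}_{p_l}(\varphi_C)+\sum_{p_l\to p_k}b_k$. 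Equivalently $b_l=N_l\cdot[\widetilde C]$, where $N_l$ runs over the basis dual to $\{[\widetilde L],[E_1],\dots,[E_n]\}$, so that $N_l\cdot[\widetilde L]=0$ and $N_l\cdot[E_k]=\delta_{lk}$; since $[E_0^*]$ is the dual vector of $[\widetilde L]$, the dual cone is $\sigma^\vee=\R_{\ge0}[E_0^*]+\sum_{l=1}^n\R_{\ge0}N_l$. Thus it is enough to prove that $[E_0^*]$ and every $N_l$ are nef: then $\sigma^\vee\subseteq\overline{\mathrm{Nef}}(X)=\mathrm{NE}(X)^\vee$, and together with $\sigma\subseteq\mathrm{NE}(X)$ this forces $\mathrm{NE}(X)=\sigma^{\vee\vee}=\sigma$. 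Now $[E_0^*]$ is the pullback of an ample class, hence nef; the nefness of the $N_l$ is where hypothesis (3) is genuinely spent, and one establishes it by representing each $N_l$ as an effective divisor assembled from the strict transform of a curve with only one place at infinity attached to the truncated configuration $\{p_1,\dots,p_l\}$ (again furnished by the hypothesis) plus a non‑negative combination of $E_0^*$ and exceptional curves, and verifying that this representative meets every irreducible curve non‑negatively via the identities above and the proximity inequalities.

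The main obstacle is precisely this last point: showing that $\mathrm{NE}(X)$ is no larger than $\sigma$, equivalently that the dual generators $N_l$ are nef. The inclusion $\sigma\subseteq\mathrm{NE}(X)$ and the implications into (1) are essentially bookkeeping with the dictionary, but bounding $\mathrm{NE}(X)$ from above forces one to exhibit sufficiently many curves with only one place at infinity, which is exactly the geometric content carried by ``non‑positive at infinity'' — the same input that underlies the effectivity of $D$ used in $(1)\Rightarrow(3)$.
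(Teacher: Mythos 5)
The paper itself contains no proof of Theorem~\ref{advances}: it is quoted from \cite[Theorem 1]{GalMon}, so there is no in-paper argument to compare yours against and I can only judge the proposal on its own terms. Your intersection-theoretic dictionary ($D\cdot[\widetilde H]=d\,d_0-\nu(\varphi_H)$, $D\cdot[\widetilde L]=0$, $D\cdot[E_i]\ge 0$ via proximity, and $\nu(g/Z^d)=-D\cdot[\widetilde H]$) is correct, and it does give complete proofs of $(3)\Rightarrow(1)$, $(2)\Rightarrow(1)$ and of the \emph{nefness} half of $(1)\Rightarrow(3)$.

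The two places you yourself flag as delicate are genuine gaps, and both rest on a premise that is false in the generality of the theorem: that a valuation satisfying (1) is realized by curves with only one place at infinity. For the effectivity of $D$ you invoke ``a plane curve $C_0$ of degree $d_0$ with only one place at infinity whose branch at $p$ is $\varphi_n$''. Such a curve exists precisely when the multiplicity data of $\nu$ satisfy the Abhyankar--Moh semigroup conditions (Conditions (2)--(3) of Definition~\ref{delta}); non-positive at infinity valuations form a strictly larger class --- this is exactly why the present paper introduces ``surfaces at infinity'' as a \emph{proper subclass} of the surfaces covered by Theorem~\ref{advances}. So $D$ need not be the class of an irreducible curve, and effectivity has to be obtained by other means (for instance Riemann--Roch from $D$ nef and $h^2(D)=h^0(K_X-D)=0$, which still leaves you to verify $D\cdot(D-K_X)\ge 0$, or the explicit numerical classification of non-positive at infinity valuations). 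The same objection, plus a second one, undermines $(3)\Rightarrow(2)$: the dual-cone strategy is sound, and in fact the dual generator $N_l$ is a positive multiple of the divisor $D_l=(\varphi_L,\varphi_l)_p E_0^*-\sum_j\mathrm{mult}_{p_j}(\varphi_l)E_j^*$ attached to the truncated valuation $\nu_l$ (pulled back from $X_l$), so its nefness is exactly condition (3) for $\nu_l$. But you never prove that (3) for $\nu=\nu_n$ is inherited by every truncation $\nu_l$ --- that heredity statement is the actual mathematical content of this implication and is not a formal consequence of your identities --- and your proposed effective models for the $N_l$ again appeal to one-place-at-infinity curves that need not exist. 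As written, $(1)/(3)\Rightarrow(2)$ and the effectivity in $(1)\Rightarrow(3)$ are statements of intent rather than proofs.
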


From a geometric perspective, it is interesting to study the surfaces defined by non-positive at infinity valuations by considering their semigroups at infinity, see \eqref{SINF}. In this paper, we restrict our attention to a subfamily of these surfaces referred to as {\it surfaces at infinity}, which will be introduced in Definition \ref{INF}. We choose this subfamily because it constitutes the only case for which information on the associated semigroups at infinity is available. Given the close relationship between surfaces at infinity and curves with only one place at infinity, our study also examines the latter family.

%We are interested in the study of the surfaces defined by non-positive at infinity valuations of $\gp^2$ attending their semigroups at infinity \eqref{SINF}. In this paper {\it we only consider surfaces at infinity,} a subclass of simple surfaces we are going to introduce. We choose this subclass because it constitutes the only case for which information on the attached semigroups at infinity is available.
\medskip

Let $C$ be a curve with only one place at infinity and let $p \in \gp^2$ be the point at infinity. Assume that $p$ is singular, $S_{C, \infty}$ is generated by a $\delta$-sequence and $\delta_0 = \deg C$, see Proposition \ref{siete+}. For $i \in \mathbb{N}$, consider a sequence of morphisms $\pi_{i+1}: X_{i+1}\rightarrow X_i$ such that $\pi_1$ is the blowup of $X_0 =\gp^2$ at $p_1 := p$ and, for $i>1$, the morphism $\pi_{i+1}$ is the blowup of $X_i$ at the unique point $p_{i+1}$ lying on the strict transform of $C$ and the exceptional divisor $E_i$ created by $\pi_{i}$. Starting from $\pi_1$, one can repeat the above procedure as many times as desired to get a simple rational surface $X$ which is given by the composition $\pi := \pi_n \circ \cdots \circ \pi_1 : X_n \rightarrow \gp^2$ cf.~\eqref{finiteseq}, where $X_0= \gp^2$. Notice that there exists a positive integer $m$ such that the composition of morphisms $X_m \rightarrow \gp^2$ gives rise to the minimal embedded resolution of $C$ at $p$. This means that $E_m$ is the first exceptional divisor such that the strict transform of $C$ in $X_m$ is regular and transversal to $E_m$.

\begin{de}
\label{INF}
A {\it surface at infinity} is a surface $X=X_n$ as in the above paragraph where $n \geq m$ and its cone of curves is polyhedral and minimally generated. Valuations of $\gp^2$ giving rise to surfaces at infinity are called {\it valuations at infinity}. They are non-positive at infinity valuations.
\end{de}

Note that a surface at infinity $X$ is always linked to a curve with only one place at infinity $C$. Surfaces at infinity over fields of characteristic zero are studied in \cite{CamPilReg}.

As indicated previously, a surface at infinity $X$ is defined by a sequence of blowups as
\begin{equation}\label{infinite_seq}
\pi: X= X_n \rightarrow \cdots \rightarrow X_{m} \rightarrow   \cdots \rightarrow X_i\xrightarrow{\pi_i} X_{i-1}\rightarrow \cdots \rightarrow X_1 \xrightarrow{\pi_1} X_0:=\gp^2.
\end{equation}

Let $K$ be the function field of $\gp^2$. Let $\nu$ the divisorial valuation at infinity of $K$ (centered at $\mathcal{O}_{\mathbb{P}^2,p}$) associated to (\ref{infinite_seq}) and consider its configuration of infinitely near points $\mathcal{C}_\nu=\{p_i\}_{i = 1}^n$. For $i>j$, we say that $p_i$ is {\it proximate} to $p_j$, and we write it $p_i\to p_j$, if $p_i\in E_j$, where, by abuse of notation and as above said, $E_j$ denotes the exceptional divisor created by blowing up at $p_j$ or any of its strict transforms. In addition, we say that a blowup center $p_i$ is {\it satellite} if there exists $j<i-1$ satisfying $p_i\to p_j$; otherwise, $p_i$ is said to be {\it free}. For each $i\geq 1$, denote by $\mathfrak{m}_i$ the maximal ideal of the local ring $R_i=\mathcal{O}_{X_{i-1},p_i}$, and set $\nu(\mathfrak{m}_i):=\min\{\nu(x) \ : \ x\in\mathfrak{m}_i\setminus\{0\}\}$. The values $\nu(\mathfrak{m}_i)$ satisfy the \emph{proximity equalities}, namely $\nu(\mathfrak{m}_i) = \sum_{p_j\to p_i}\nu(\mathfrak{m}_j), \ \ i\geq 1,$ whenever $\{p_j\in\mathcal{C}_\nu \ : \ p_j\to p_i\}\neq \emptyset$, see e.g. \cite[Theorem 8.1.7]{Cas}.
\medskip

Recall that, by \cite[Theorem 7.2]{Spiv}, for any $\phi \in \mathcal{O}_{\gp^2,p}$, we have
$$
\nu(\phi) = \min \{ (\phi,\varphi)_p : \varphi \mbox{ is a general element of $\nu$} \}.
$$

Let $\Delta =(\delta_0,\ldots, \delta_{g})$ be the $\delta$-sequence generating $S_{C,\infty}$. According to \cite[Section 4]{GalMonCod1}, the semigroup at infinity of $\nu$, $S_{\nu,\infty}$, is generated by the sequence $\Lambda = (\delta_0,\ldots, \delta_{g},\delta_{g+1})$, where $\delta_{g+1} = n_g\delta_g -n + m$. This is called a {\it $\delta$-sequence of type A}. \label{tipoA} Since the valuation $\nu$ is non-positive at infinity, it follows that $n \leq n_g\delta_g + m$. As a consequence, the semigroups at infinity of the divisorial valuations considered in this paper are generated by (ordered) sequences of $g+2$ positive integers. The most significant information is contained in the first $g+1$ elements which form a $\delta$-sequence (in $\mathbb{N}_{>0}$). The last value $\delta_{g+1}$ determines only the number of consecutive free points in the last block of the configuration $\mathcal{C}_\nu$ giving rise to \eqref{infinite_seq} \cite[Remark 4.1]{GalMonCod1}. Thus, given $\Delta$, $\delta_{g+1}$ determines only the difference $n-m$. The last $n-m$ generators $[E_i]$ of the cone of curves NE$(X)$ correspond to classes of strict transforms of exceptional divisors that meet transversally at a unique point. The semigroup generated by $\Delta$ is called the $\delta$-{\it semigroup of} $\nu$ (or of $X$).
\medskip

Let $X$ be a surface at infinity given by a sequence of blowups as in (\ref{infinite_seq}). As in the case of the analytically irreducible germs of plane curve, one can consider the dual graph associated to the sequence (\ref{infinite_seq}) which, by abuse of notation, we call {\it dual graph of $X$} (see Figure 1 in \cite{GalMonMor3}). This graph consists of the dual graph of the (embedded resolution of the) germ at $p=p_1$ of the associated curve $C$ with only one place at infinity plus finitely many (may be none) vertices (and edges joining them) corresponding to the last chain of consecutive free points in $\mathcal{C}_\nu$. The dual graph can be constructed  from the Puiseux exponents of the valuation at infinity $\nu$,  see   \cite[Section 6]{Spiv} or \cite[Subsection 3.2.1]{GalMonMor3}.

Following a procedure analogous to the one described prior to Subsection \ref{subsec:Coxring}, the $\delta$-sequence of type A generating $S_{\nu, \infty}$ determines the Puiseux exponents and, consequently, the dual graph of $X$ (see the forthcoming Example \ref{ex:new}). This construction permits us to compute the intersection products of the classes generating the cone NE$(X)$. Indeed, the dual graph determines the products $[E_i]\cdot [E_j]$, $1 \leq i, j \leq n$; furthermore to compute the products $[\tilde{L}]\cdot [E_i]$, it suffices to know the number $\eta_X$ of points of $\mathcal{C}_\nu$ through which (the strict transforms of) $L$ pass. Since $\delta_0 = \deg C$, by the Noether formula \cite[Theorem 8.1.6]{Cas}, one deduces that $\eta_X= \left\lceil \frac{\delta_0}{\mathrm{mult}_p(\varphi_\nu)} \right\rceil^+$, where $\left \lceil x \right \rceil^+$ is defined as the minimum positive integer bound of $\{x\}$, $\varphi_\nu$ is a general element of $\nu$ and $\mathrm{mult}_p(\varphi_\nu)$ represents its multiplicity at $p$.

\medskip
Based on the above information, it makes sense to investigate the following problem: {\it Given a semigroup at infinity $S$ of a curve with only one place at infinity, study those rational surfaces at infinity having a $\delta$-sequence of type A, $\Lambda = (\delta'_0, \ldots, \delta'_{g'+1})$,   whose $\delta$-semigroup is $S$.} The sequence $(\delta'_0, \ldots, \delta'_{g'})$ is referred to as the $\delta$-sequence of  $\Lambda$. As we said in the introduction, we do not distinguish between surfaces at infinity sharing the same $\delta$-sequence of type A because they are associated with curves with only one place at infinity whose germs at $p$ are equisingular, and the generators of their cones of curves have the same structure.
\medskip

The challenge lies in determining how many distinct $\delta$-sequences generate $S$ and how to construct them. Once a specific $\delta$-sequence is obtained, say $\Delta =(\delta_0,\ldots, \delta_{g}),$ there exists only finitely many $\delta$-sequences of type A of the form $(\delta_0,\ldots, \delta_{g},\delta_{g+1})$. These sequences correspond to valuations at infinity that yield surfaces for which the extremal rays of their cones of curves are explicitly known.
\begin{example}\label{ex:new}
At this point, we give a very simple example to illustrate the geometric component of the problem addressed in this paper and subsequent results.

Consider the $\delta$-sequence $\Delta_1:=(\delta^1_0=12,\delta_1^1=8,\delta_2^1=9)$. One can check that $\Delta_1$ is a $\delta$-sequence from Definition \ref{delta} and $\Delta_1$ is degenerate. Let $S$ be the semigroup  generated by $\Delta_1$. With the notation after Definition \ref{def:semigroupatinfinity}, $d_1^1=12,d_2^1=4,d_3^1=1, n_1^1=3$ and $n_2^1=4$. Then, the values introduced below Definition \ref{noprincipal} are $e_0^1=4$ and $m_0^1=27$, and the set of Puiseux exponents of $\Delta_1$ is $P_{\Delta_1}=\{\beta_i'(\Delta_1)\}_{i=0}^1,$ where $\beta'_0(\Delta_1)=1$ and $\beta'_1(\Delta_1)=27/4.$ The continued fraction of $\beta'_1(\Delta_1)$ is
$$
\beta'_1(\Delta_1)=6+\dfrac{1}{1+\frac{1}{3}}.
$$
Let $X$ be a surface at infinity having the $\delta$-sequence $\Delta_1$. The surface $X$ will be given by a divisorial valua\-tion $\nu_X$ whose semigroup is associated to a $\delta$-sequence of type A, $\Lambda_X:=(12,8,9,\delta_3^X)$ for some non-negative integer $\delta_3^X$ such that $0\leq \delta_3^X\leq n_2^1\delta_2^1=36$. With the  notation above $m^X=10$ and $n^X=46-\delta_3^X.$

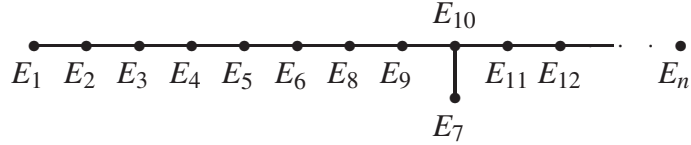
\begin{figure}[http]
\[
\unitlength=1.00mm
\begin{picture}(90.00,18.00)(-2,3)
\thicklines \put(2,15){\line(1,0){72}} \put(8,15){\line(1,0){10}}
%\put(-5,15){\circle*{1.5}}
\put(2,15){\circle*{1.5}}
\put(9,15){\circle*{1.5}} \put(16,15){\circle*{1.5}}
\put(23,15){\circle*{1.5}} \put(30,15){\circle*{1.5}}
\put(37,15){\circle*{1.5}} \put(44,15){\circle*{1.5}}
\put(51,15){\circle*{1.5}} \put(58,15){\circle*{1.5}}
\put(65,15){\circle*{1.5}} \put(72,15){\circle*{1.5}}
\put(80,15){\circle*{0.2}} \put(77,15){\circle*{0.2}}
\put(83,15){\circle*{0.2}} \put(86,15){\circle*{1.5}}
\put(58,8){\circle*{1.5}} \put(58,8){\line(0,1){7}}
%\put(-8,10){$E_1$}
\put(-1,10){$E_1$} \put(6,10){$E_2$}
\put(13,10){$E_3$} \put(20,10){$E_4$} \put(27,10){$E_5$}
\put(34,10){$E_6$} \put(41,10){$E_8$} \put(48,10){$E_9$}
\put(55,18){$E_{10}$} \put(55,3){$E_{7}$}
\put(62,10){$E_{11}$} \put(69,10){$E_{12}$}
\put(85,10){$E_{n^X}$}
\end{picture}
\]
\caption{Dual graph of $X$} \label{fig0}
\end{figure}
The relative positions of the exceptional divisors, $\{E_i^X:= E_i\}_{i=1}^{n^X}$, {\it the classes of which, together with $[\tilde{L}],$ generate the cone of curves of $X$} are displayed in the dual graph of the sequence $\pi:X:=X_{n^X}\to\cdots\to\mathbb{P}^2$ defined by $\nu_X$. See \cite[Subsection 2.3.1]{GalMonMor3} for the construction procedure (which utilizes the continued fraction of the Puiseux exponents) and Figure \ref{fig0}  for the dual graph. This allows us to easily obtain the intersection matrix of the divisors $E_i$. %and the self-intersections $E_i^2$ of the divisors $E_i,1\leq i \leq n^X$ see \cite[Proposition 1.1.26]{Alb}.
In particular, $E_i^2=-r_i-1$, where $r_i$ is the number of points of the configuration of $\nu_X$ proximate to $p_i$. Furthermore, the intersection $E_i\cdot E_j$, $i \neq j$, is either $1$ if $E_i\cap E_j\neq\emptyset$, or $0$, otherwise (see \cite[Proposition 1.1.26]{Alb}).   For instance, $E_7^2=-r_7-1=-4,$ where $r_7=3$ is the number of points of the configuration of $\nu_X$ proximate to $p_7$.

Finally, we can compute the intersection matrix of (the  classes generating) NE$(X)$, Figure \ref{fig2}. Specifically, we only need to augment the intersection matrix of the divisors $E_i$ with a first row containing the intersection products with $\tilde{L}$. In this case, the multiplicity $\mathrm{mult}_p(\varphi_{\nu_X})=4$ and, as explained previously, $\eta_X = 12/4 =3$. Hence, setting $E_i^*$ the total transforms on $X$ of the divisors $E_i$, we obtain $\tilde{L} = E_0^* - E_1^*-E_2^*-E_3^*$, which yields $\tilde{L}^2 = -2$, $\tilde{L} \cdot E_3=1$ and $\tilde{L} \cdot E_i=0$, for all $i \neq 3$.
\medskip

\begin{figure}[http]
{\small $$ \left(\begin{array}{ccccccccccccccccc}
-2 & 0  & 0  & 1  & 0  & 0  & 0  & 0  & 0  & 0  & 0  & 0 & \cdots & 0 & 0 & 0 & 0 \\
0  & -2 & 1  & 0  & 0  & 0  & 0  & 0  & 0  & 0  & 0  & 0 & \cdots & 0 & 0 & 0 & 0 \\
0  & 1  & -2 & 1  & 0  & 0  & 0  & 0  & 0  & 0  & 0  & 0 & \cdots & 0 & 0 & 0 & 0 \\
1  & 0  & 1  & -2 & 1  & 0  & 0  & 0  & 0  & 0  & 0  & 0 & \cdots & 0 & 0 & 0 & 0 \\
0  & 0  & 0  & 1  & -2 & 1  & 0  & 0  & 0  & 0  & 0  & 0 & \cdots & 0 & 0 & 0 & 0 \\
0  & 0  & 0  & 0  & 1  & -2 & 1  & 0  & 0  & 0  & 0  & 0 & \cdots & 0 & 0 & 0 & 0 \\
0  & 0  & 0  & 0  & 0  & 1  & -3 & 0  & 1  & 0  & 0  & 0 & \cdots & 0 & 0 & 0 & 0 \\
0  & 0  & 0  & 0  & 0  & 0  & 0  & -4 & 0  & 0  & 1 & 0 & \cdots & 0 & 0 & 0 & 0 \\
0  & 0  & 0  & 0  & 0  & 0  & 1  & 0  & -2 & 1  & 0  & 0 & \cdots & 0 & 0 & 0 & 0 \\
0  & 0  & 0  & 0  & 0  & 0  & 0  & 0  & 1  & -2 & 1  & 0 & \cdots & 0 & 0 & 0 & 0 \\
0  & 0  & 0  & 0  & 0  & 0  & 0  & 1  & 0  & 1  & -2 & 1 & \cdots & 0 & 0 & 0 & 0 \\
\vdots & \vdots & \vdots & \vdots & \vdots & \vdots & \vdots & \vdots & \vdots & \vdots & \vdots & \vdots & \vdots & \vdots & \vdots & \vdots & \vdots \\
0 & 0 & 0 & 0 & 0 & 0 & 0 & 0 & 0 & 0 & 0 & 0 & \cdots & 1 & -2 & 1  & 0  \\
0 & 0 & 0 & 0 & 0 & 0 & 0 & 0 & 0 & 0 & 0 & 0 & \cdots & 0 & 1  & -2  & 1 \\
0 & 0 & 0 & 0 & 0 & 0 & 0 & 0 & 0 & 0 & 0 & 0 & \cdots & 0 & 0  & 1  & -1
\end{array}\right)
$$}
\caption{Intersection matrix of NE$(X)$} \label{fig2}
\end{figure}

%Recall that we desire to consider the semigroup at infinity as our invariant for the set of surfaces at infinity. For instance those

Surfaces at infinity $Y$ whose $\delta$-sequence of type A is $\Lambda_Y=(\delta_0^2=12,\delta_1^2=8,\delta_2^2=18,\delta_3^2=9,\delta_4^Y),$ for an appropriately chosen $\delta_4^Y$, posses the same $\delta$-semigroup as $X$. Since one of our aims is to study families of surfaces at infinity sharing the same $\delta$-semigroup, $X$ and $Y$ constitute an example of the surfaces we are interested in.

With respect to $Y$, the corresponding $\delta$-sequence is $\Delta_2=(12,8,18,9)$ which also generates $S$. In this case $d_1^2=12,d_2^2=4,d_3^2=2,d_4^2=1,n_1^2=3,n_2^2=2$ and $n_3^2=2$. In addition, $e_0^2=4,m_0^2=18, e_1^2=2,m_1^2=27.$ Then $m^Y=21$ and $n^Y=57-\delta_4^Y$ where $
0\leq\delta_4^Y\leq n_3^2\delta_3^2=18$. Here the set of Puiseux exponents is   $P_{\Delta_2}=\{\beta_i'(\Delta_2)\}_{i=0}^2,$
$$
 \beta_0'(\Delta_2)=1,\,\beta_1'(\Delta_2)=\dfrac{18}{4}=4+\dfrac{1}{2} \text{ and }\beta_2'(\Delta_2)=1+\dfrac{27}{2}=14+\dfrac{1}{2}.
$$
Consequently,  {\it the generators of the cone of curves $\operatorname{NE}(Y)$ differ significantly from those of $\operatorname{NE}(X)$, yet, as noted, $X$ and $Y$ share the same $\delta$-semigroup $S$.} We have precise information regarding these generators as the relative position and the intersection matrix of the exceptional divisors $\{E_i^Y:= E'_i\}_{i=1}^{n^Y}$ whose classes (together with $[\tilde{L}]$) generate the cone $\operatorname{NE}(Y)$ can be schematically displayed by the dual graph of $Y$ (see Figure \ref{fig1}). The intersection matrix of NE$(Y)$ can be also obtained using the same method as for $X$; for instance $(E'_4)^2 = - 3$ and $\tilde{L} \cdot E'_3$ is again $1$ because $\eta_Y$ is also $3$. We will remark, later, that $X$ is less intricate than $Y$.

\begin{figure}[http]
\[
\unitlength=1.00mm
\begin{picture}(90.00,18.00)(0,3)
\thicklines \put(2,15){\line(1,0){44}} \put(8,15){\line(1,0){10}}
\thicklines \put(57,15){\line(1,0){25}}
%\put(-5,15){\circle*{1.5}}
\put(2,15){\circle*{1.5}}
\put(9,15){\circle*{1.5}} \put(16,15){\circle*{1.5}}
\put(23,15){\circle*{1.5}} \put(30,15){\circle*{1.5}}
\put(30,8){\circle*{1.5}}  \put(30,8){\line(0,1){7}}
\put(37,15){\circle*{1.5}} \put(44,15){\circle*{1.5}}
\put(49,15){\circle*{0.2}} \put(52,15){\circle*{0.2}}
\put(55,15){\circle*{0.2}}
\put(59,15){\circle*{1.5}} \put(66,15){\circle*{1.5}}
\put(73,15){\circle*{1.5}}
\put(73,8){\circle*{1.5}}\put(73,8){\line(0,1){7}}
 \put(80,15){\circle*{1.5}}
\put(91,15){\circle*{0.2}} \put(88,15){\circle*{0.2}}
\put(85,15){\circle*{0.2}} \put(94,15){\circle*{1.5}}
%\put(58,8){\circle*{1.5}} \put(58,8){\line(0,1){7}}
%
\put(-1,10){$E_1$} \put(6,10){$E_2$}
\put(13,10){$E_3$} \put(20,10){$E_4$}
\put(27,3){$E_5$} \put(28,17){$E_6$}
\put(34,10){$E_7$} \put(41,10){$E_8$}
\put(56,10){$E_{18}$} \put(63,10){$E_{19}$}
\put(70,3){$E_{20}$} \put(71,17){$E_{21}$}
\put(77,10){$E_{22}$} \put(91,10){$E_{n^Y}$}
%
% \put(27,10){$E_5$}
%\put(34,10){$E_6$} \put(41,10){$E_8$} \put(48,10){$E_9$}
%\put(55,18){$E_{10}$} \put(55,3){$E_{7}$}
%\put(62,10){$E_{11}$} \put(69,10){$E_{12}$}
%\put(85,10){$E_{n}$}
%
%
\end{picture}
\]
\caption{Dual graph of $Y$} \label{fig1}
\end{figure}

\end{example}

In the case when char$(k)=0$, there is a nice geometrical characterization of the valuations at infinity and surfaces we are interested in. With the notation above, let $C$ be the underlying  curve at infinity given by $f(x,y)=0$ and consider $\mathcal{F}$ the pencil of plane curves on $\gp^2$ whose equations in the affine chart $Z\neq 0$ are $f(x,y)=\lambda,$ where $\lambda\in k$. By Moh \cite{Moh}, every curve in $\mathcal{F}$ has only one place at infinity and is equisingular at $p.$  Furthermore, let $r\geq m$ be the smallest positive integer such that the composition of the first $r$ blowups in a sequence as  \eqref{infinite_seq} eliminates the base points of the pencil $\mathcal{F}$. According to \cite[Remark 4.2]{GalMonCod1}, $\nu$ is a valuation at infinity whenever $n \leq r$.

\section{Constructing $\delta$-sequences}\label{section:gluing}

%We are interested in surfaces attached to $\delta$-sequences of type A which were defined in page \pageref{tipoA}. \red{One can observe that} $\delta$-sequences of type A are easily constructed from their $\delta$-sequences.

The classical definition of $\delta$-sequence (in $\mathbb{N}_{>0}$), Definition \ref{delta}, is of implicit type. That is, while it is easy to determine whether a given sequence in $\mathbb{N}_{>0}$ is a  $\delta$-sequence, constructing relatively large $\delta$-sequences is difficult.  In this section we state a characterization result of $\delta$-sequences and an easy to apply construction method based on the gluing of numerical semigroups. These results allow us to prove that all $\delta$-sequences can be obtained via gluing and they show how to get all $\delta$-sequences with a fixed first element $\delta_0$. Our results are algorithmic but not efficient for very large integers.
\medskip

We begin with a result that explains how to construct any $\delta$-sequence.

%; from this point of view it is useful to try to understand the semigroups at infinity by their property of freeness. For it, we may reinterpret the three axioms of a $\delta$-sequence  to give an alternative description of a $\delta$-sequence as follows.

\begin{prop} \label{alternate}
A sequence $\Delta = (\delta_i )_{i=0}^g$ in $\mathbb{N}_{>0}$ is a $\delta$-sequence if and only if there exists two sequences in $\mathbb{N}_{>0}$,  $(a_i)_{i=0}^g$  and $(n_i)_{i=0}^g$ with $n_ i>1$ for $i \geq 1$, such that
$$
\delta_i= a_i \prod_{j=i+1}^g n_{j},
$$
where $n_0 =a_0 = 1$, $\gcd(n_i,a_i)=1$, $a_i < n_i n_{i-1} a_{i-1}$ for $ 1 \leq i \leq g$ (by convention, we set $\prod_{j=i+1}^g n_j=1$ if $i=g$) \label{A}  and, for $i \geq 2$, the integer $a_i$ must belong to the semigroup generated by the values $a_{i-1}$ and $a_{\ell}\prod_{\ell+1 \leq j \leq i-1} n_j $, for $\ell=0, 1, \ldots, i-2$.
\end{prop}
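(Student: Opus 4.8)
The plan is to prove the two implications separately by carefully unravelling the definition of a $\delta$-sequence in terms of the $d$-sequence and $n$-sequence. The key bookkeeping device is the observation that, given a candidate sequence $\Delta=(\delta_i)_{i=0}^g$, once we know that the prescribed integers $n_i$ (with $n_i>1$ for $i\geq 1$ and $d_{g+1}=1$) are indeed the $n$-sequence of $\Delta$, we automatically get $d_i(\Delta)=\prod_{j=i}^g n_j$; hence writing $\delta_i = a_i \prod_{j=i+1}^g n_j$ is the same as declaring $a_i = \delta_i / d_{i+1}(\Delta)$. So the factorization in the statement is essentially a change of variables, and what needs checking is that Conditions (1)--(3) of Definition \ref{delta} translate exactly into the stated conditions on $(a_i)$ and $(n_i)$.

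First I would prove the ``if'' direction. Assume we are given $(a_i)_{i=0}^g$ and $(n_i)_{i=0}^g$ as in the statement and set $\delta_i = a_i \prod_{j=i+1}^g n_j$. I would show, by a descending induction on $i$ (or directly), that $d_{i+1}(\Delta)=\gcd(\delta_0,\ldots,\delta_i) = \prod_{j=i+1}^g n_j$: the divisibility $\prod_{j=i+1}^g n_j \mid \delta_\ell$ for $\ell \leq i$ is clear from the formula, and the reverse uses $\gcd(n_i,a_i)=1$ to peel off one factor at a time, together with the fact that $\gcd(a_i, n_{i-1}) $-type obstructions do not arise because $n_i$ only divides the $\ell\geq i$ terms. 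This gives $d_{g+1}=1$ and $n_i(\Delta) = d_i/d_{i+1} = n_i$, so Condition (1) holds. Condition (3), $\delta_i < n_{i-1}\delta_{i-1}$, is obtained by multiplying the hypothesis $a_i < n_i n_{i-1} a_{i-1}$ by $\prod_{j=i+1}^g n_j$ and recognizing $n_i \prod_{j=i+1}^g n_j = \prod_{j=i}^g n_j$. For Condition (2), $n_i\delta_i \in \langle \delta_0,\ldots,\delta_{i-1}\rangle$: for $i=1$ this holds because $\delta_1 = \prod_{j=2}^g n_j$ divides $\delta_0$ up to the factor $a_0=1$ — more precisely $n_1\delta_1 = n_1 n_0 \prod_{j=2}^g n_j$ is a multiple of $d_1(\Delta)$ hence of $\delta_0/\gcd$, and I would just check $n_1\delta_1$ is an integer multiple of $\delta_0$ directly, which follows since $a_0=1$; for $i\geq 2$ the hypothesis that $a_i$ lies in the semigroup generated by $a_{i-1}$ and the $a_\ell \prod_{\ell+1\leq j\leq i-1} n_j$ translates, after multiplying through by $\prod_{j=i}^g n_j = n_i\prod_{j=i+1}^g n_j$, into $n_i\delta_i$ being a nonnegative combination of $\delta_{i-1}$ and the $\delta_\ell$, i.e. membership in $\langle \delta_0,\ldots,\delta_{i-1}\rangle$. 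This last translation is the crux: one has to match $a_\ell \prod_{\ell+1\leq j\leq i-1} n_j \cdot \prod_{j=i}^g n_j = a_\ell \prod_{j=\ell+1}^g n_j = \delta_\ell$ and $a_{i-1}\cdot \prod_{j=i}^g n_j = n_{i-1}\cdot a_{i-1}\prod_{j=i}^{g} n_j/n_{i-1}$... I would be careful here and note $a_{i-1}\prod_{j=i}^g n_j = n_{i-1}\,a_{i-1}\prod_{j=i}^g n_j / n_{i-1}$ is not quite $\delta_{i-1}$; rather $\delta_{i-1}=a_{i-1}\prod_{j=i}^g n_j$, so the generator $a_{i-1}$ scales exactly to $\delta_{i-1}$, which is cleaner than I first wrote.

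For the ``only if'' direction, given a $\delta$-sequence $\Delta$, define $n_i:=n_i(\Delta)$ and $a_i:=\delta_i/d_{i+1}(\Delta)$, which is an integer since $d_{i+1}\mid \gcd(\delta_0,\ldots,\delta_i)\mid \delta_i$. Then $a_i\prod_{j=i+1}^g n_j = (\delta_i/d_{i+1})\cdot d_{i+1} = \delta_i$ because $\prod_{j=i+1}^g n_j = d_{i+1}(\Delta)$ (using $d_{g+1}=1$). One checks $n_0=a_0=1$ ($d_1(\Delta)$ could be $>1$ in general, but $a_0 = \delta_0/d_1$; here I should note $d_1 = \gcd(\delta_0) = \delta_0$, so indeed $a_0=1$), $n_i>1$ for $i\geq1$ from Condition (1), $\gcd(n_i,a_i)=1$ because $\gcd(d_i/d_{i+1},\, \delta_i/d_{i+1})$ divides $\gcd(d_i,\delta_i)/d_{i+1} \cdot$(...) — more simply, any common prime of $n_i$ and $a_i$ would, multiplied by $d_{i+1}$, be a common factor of $d_i$ and $\delta_i$ beyond $d_{i+1}=\gcd(\delta_0,\ldots,\delta_{i-1},\delta_i)$... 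I would phrase this as: $d_{i+1}\gcd(n_i,a_i)$ divides both $d_i$ and $\delta_i$, but $\gcd(d_i,\delta_i) = \gcd(\gcd(\delta_0,\ldots,\delta_{i-1}),\delta_i)=d_{i+1}$, forcing $\gcd(n_i,a_i)=1$. The inequality $a_i<n_in_{i-1}a_{i-1}$ comes from dividing Condition (3) by $d_{i+1}$ and using $\delta_{i-1}/d_{i+1} = n_i\,\delta_{i-1}/d_i \cdot$ — again $\delta_{i-1}/d_{i+1} = (\delta_{i-1}/d_i)\cdot n_i = a_{i-1} n_{i-1} \cdot$ wait, $\delta_{i-1}/d_i = a_{i-1}\prod_{j=i}^g n_j / \prod_{j=i}^g n_j = a_{i-1}$, so $\delta_{i-1}/d_{i+1} = a_{i-1}n_i$; hmm that gives $\delta_i/d_{i+1} < n_{i-1}\cdot a_{i-1} n_i$, i.e. $a_i < n_{i-1} a_{i-1} n_i$, exactly as wanted. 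Finally the semigroup membership condition on $a_i$ ($i\geq2$) is obtained by dividing Condition (2)'s expression of $n_i\delta_i$ as a nonnegative combination $\sum c_\ell \delta_\ell$ ($0\leq \ell\leq i-1$) by $d_{i+1}$ and grouping: the $\ell=i-1$ term gives a multiple of $a_{i-1}$ (and we may absorb any integer multiple, since once $c_{i-1}\geq 1$ we can reduce modulo $a_{i-1}$-multiples, or just keep it), while for $\ell\leq i-2$ we get $\delta_\ell/d_{i+1} = a_\ell \prod_{j=\ell+1}^{i-1} n_j$ — but wait, we also need to divide by $n_i$ on the left since we have $n_i\delta_i$ not $\delta_i$; so after dividing by $d_i = n_i d_{i+1}$ we get $a_i = n_i\delta_i/d_i$ expressed as $\sum c_\ell \delta_\ell / d_i$, and $\delta_\ell/d_i = a_\ell\prod_{j=\ell+1}^g n_j / \prod_{j=i}^g n_j = a_\ell\prod_{j=\ell+1}^{i-1} n_j$ for $\ell\leq i-1$, which for $\ell=i-1$ is the empty product $a_{i-1}$. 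This is precisely the claimed semigroup.

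The main obstacle I anticipate is purely notational: keeping the telescoping products $\prod_{j=i+1}^g n_j$, $d_i(\Delta)$, and the re-scaled generators aligned without index errors, and handling the boundary cases $i=g$ (empty product), $i=1$ (where the semigroup condition degenerates to divisibility, and Condition (2) must be verified separately from the $i\geq2$ argument), and the identification $a_0=1$ (which hinges on $d_1(\Delta)=\delta_0$). No deep idea is needed beyond the change of variables $a_i = \delta_i/d_{i+1}$; the content is the verification that this is a bijection between $\delta$-sequences and the data $\big((a_i),(n_i)\big)$ satisfying the listed constraints.
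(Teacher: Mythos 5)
Your proposal is correct and follows essentially the same route as the paper: the forward implication is obtained by multiplying the conditions on $(a_i)$ and $(n_i)$ by the appropriate products $\prod n_j$ (after checking $d_{i+1}(\Delta)=\prod_{j=i+1}^g n_j$ via $\gcd(n_i,a_i)=1$), and the converse by the change of variables $a_i=\delta_i/d_{i+1}(\Delta)$, $n_i=d_i/d_{i+1}$. Your version is somewhat more detailed than the paper's (which leaves the boundary cases and the $\gcd$ verifications implicit), but there is no difference in substance.
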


\begin{proof}
First we assume that $\Delta$ is a sequence of positive integers as in the statement. We show that $\Delta$ is a $\delta$-sequence by verifying that it fulfils Conditions (a), (b) and (c) in Definition \ref{delta}. Condition (a) is satisfied since $\gcd(a_g,n_g)=1$, which implies $d_i= n_i\cdots n_g$ and $d_{g+1}=1$. Let
$$
a_{i} = \sum_{\ell=0}^{i-2} \alpha_{\ell}\Big(a_{\ell} \prod_{j=\ell+1}^{i-1} n_j \Big)+ \alpha_{i-1} a_{i-1},
$$
where $\alpha_{\ell} \in \mathbb{N}$, $\ell \in [0, i-1]$. To see that Condition (b) holds, it suffices to multiply both sides of the equality by $n_i\cdots n_g$. Finally, multiplying  both sides of the inequality $a_i < n_i n_{i-1} a_{i-1}$ by $n_{i+1}\cdots n_g$ yields Condition (c).

Conversely, assume that $\Delta = ( \delta_i )_{i=0}^g$  is a $\delta$-sequence. Then, $d_1=\delta_0$, $d_2=\gcd(\delta_0,\delta_1)$, and by the definition of $n_1$, $\delta_0 = \gcd(\delta_0,\delta_1) n_1$. Analogously,
$$
\gcd(\delta_0,\delta_1) = \gcd(\delta_0,\delta_1,\delta_2) n_2.
$$
By iteration, we conclude that $\delta_0 = n_1\cdots n_g$. Since $d_2=\gcd(\delta_0,\delta_1) = n_2\cdots n_g$, we have that $\delta_1 = a_1 \prod_{j=2}^g n_j$ for some integer $a_1$ such that $\gcd(n_1,a_1)=1$. Repeating the same reasoning, it is straightforward to show that each $\delta_i$ can be written in the form $\delta_i= a_i \prod_{j=i+1}^g n_j$ as stated. Finally, arguments similar to those above establish the remaining stated conditions for the values $n_i$ and $a_i$.
\end{proof}

A first consequence is the following one.

\begin{co} \label{c21}
Let $\delta_0$ be a positive integer. There exist finitely many $\delta$-sequences whose first element is $\delta_0$. Moreover, the maximal value for $g$ among these $\delta$-sequences is the number of prime factors of $\delta_0$ counting repetitions.
\end{co}

\begin{proof}
The result follows from the facts that $\delta_0$ admits finitely many factorizations $\prod_{j=1}^g n_j$ and the inequalities $a_i < n_i n_{i-1} a_{i-1}$.
\end{proof}

The value $\delta_0$ of the $\delta$-sequence generating the semigroup $S_{C,\infty}$ of the curve $C$ with only one place at infinity attached to a surface at infinity coincides  with the degree of $C$. Therefore, we have proved the existence of finitely many semigroups at infinity  for curves with only one place at infinity of fixed degree. In addition, Proposition \ref{alternate} implies that the cardinality of a $\delta$-sequence cannot exceed $1$ plus the number of prime factors of $\delta_0$.
\medskip

Let $S_1$ (respectively, $S_2$) be a numerical semigroup generated by positive integers $a_1,\ldots , $ $a_m$ (respectively, $a_{m+1},\ldots , a_n$). Let $\beta\in S_1 \setminus \{a_1,\ldots , a_m\}$ and $\alpha\in S_2 \setminus \{a_{m+1},\ldots , a_n\}$ with $\alpha$ and $\beta$ being coprime. We say that the semigroup
$$
S=\langle \alpha a_1,\ldots \alpha a_m, \beta a_{m+1},\ldots, \beta a_n \rangle
$$
is the \emph{gluing of $S_1$ and $S_2$ through $\alpha$ and $\beta$}, see e.g.~\cite{RosGarsan}. This operation was introduced by Delorme \cite{delormeglue} (and, later, named gluing by Rosales and Garc\'ia-S\'anchez \cite[Chapter~8]{RosGarsan}) in order to study complete intersection semigroups. Semigroups at infinity of curves with only one place at infinity are free in the sense of Bertin and Carbonne \cite{beca77}, hence complete intersection \cite[\S 15]{delormeglue}. Next, we give conditions under which the gluing two $\delta$-sequences provides a $\delta$-sequence.

\begin{theo}\label{thm:gluing_delta}
    Let $\Delta_1=(\delta_0^1,\ldots,\delta_{g_1}^1)$ and $\Delta_2=(\delta_0^2,\ldots,\delta_{g_2}^2)$ be two $\delta$-sequences. Let $S_{\Delta_1}$ (respectively, $S_{\Delta_2}$) the semigroup spanned by $\Delta_1$ (respectively, $\Delta_2$). Consider the sequence in $\mathbb{N}_{>0}$ $\Delta=(\alpha\delta_0^1,\ldots,\alpha\delta_{g_1}^1,\beta\delta_0^2,\ldots,\beta\delta_{g_2}^2)$ generating the gluing of $S_{\Delta_1}$ and $S_{\Delta_2}$ through $\alpha$ and $\beta$, where $\beta\in S_{\Delta_1}$, $\alpha\in S_{\Delta_2}$ and $\gcd(\alpha,\beta)=1$. Assume also that $\alpha=m \delta_0^2$, $m\in\mathbb{N}_{>1}$, and $\beta$ satisfies the condition $\beta<m\cdot n_{g_1}(\Delta_1)\cdot \delta_{g_1}^1$. Then $\Delta$ is a $\delta$-sequence.
\end{theo}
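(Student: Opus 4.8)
The plan is to verify directly the three conditions of Definition \ref{delta} for the sequence
\[
\Delta=(\delta_0,\ldots,\delta_g):=(\alpha\delta_0^1,\ldots,\alpha\delta_{g_1}^1,\beta\delta_0^2,\ldots,\beta\delta_{g_2}^2),\qquad g:=g_1+g_2+1,
\]
after first expressing the $d$- and $n$-sequences of $\Delta$ in terms of those of $\Delta_1$ and $\Delta_2$. Throughout I use the notation of the statement and abbreviate $n_i^\ell:=n_i(\Delta_\ell)$, $d_i^\ell:=d_i(\Delta_\ell)$ for $\ell\in\{1,2\}$.

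First I would compute the $d$-sequence of $\Delta$. Pulling the common factor $\alpha$ out of the first $g_1+1$ entries gives $d_i(\Delta)=\alpha\, d_i^1$ for $1\le i\le g_1+1$; in particular $d_{g_1+1}(\Delta)=\alpha$, since $d_{g_1+1}^1=1$. For $1\le k\le g_2+1$, setting $i=g_1+1+k$, the $\gcd$ of the first $g_1+1$ entries is $\alpha$ and the $\gcd$ of the next $k$ entries is $\beta d_k^2$, so $d_i(\Delta)=\gcd(\alpha,\beta d_k^2)=\gcd(\alpha,d_k^2)=d_k^2$: here the middle equality uses $\gcd(\alpha,\beta)=1$ and the last one uses $d_k^2\mid d_1^2=\delta_0^2\mid m\delta_0^2=\alpha$. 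Dividing consecutive terms then yields $n_i(\Delta)=n_i^1$ for $1\le i\le g_1$, $n_{g_1+1}(\Delta)=\alpha/\delta_0^2=m$, and $n_{g_1+1+k}(\Delta)=n_k^2$ for $1\le k\le g_2$. Condition (1) follows at once: $d_{g+1}(\Delta)=d_{g_2+1}^2=1$, and every $n_i(\Delta)$ equals either an $n$-value of $\Delta_1$ or of $\Delta_2$ (hence $>1$), or equals $m>1$.

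For Condition (2) I would split into the same three ranges. For $1\le i\le g_1$, expressing $n_i^1\delta_i^1$ as a non-negative combination of $\delta_0^1,\ldots,\delta_{i-1}^1$ and multiplying by $\alpha$ shows $n_i(\Delta)\delta_i=\alpha\,n_i^1\delta_i^1\in\langle\delta_0,\ldots,\delta_{i-1}\rangle$. For the junction index $i=g_1+1$ the key identity is $n_{g_1+1}(\Delta)\delta_{g_1+1}=m\cdot\beta\delta_0^2=\alpha\beta$, and since $\langle\delta_0,\ldots,\delta_{g_1}\rangle=\alpha\,S_{\Delta_1}$, membership of $\alpha\beta$ here is precisely the hypothesis $\beta\in S_{\Delta_1}$. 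For $i=g_1+1+k$ with $1\le k\le g_2$, writing $n_k^2\delta_k^2$ over $\delta_0^2,\ldots,\delta_{k-1}^2$ and multiplying by $\beta$ puts $n_i(\Delta)\delta_i$ in $\langle\beta\delta_0^2,\ldots,\beta\delta_{k-1}^2\rangle\subseteq\langle\delta_0,\ldots,\delta_{i-1}\rangle$.

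Condition (3) is handled analogously. For $1\le i\le g_1$ it reduces after cancelling $\alpha$ to $\delta_i^1<n_{i-1}^1\delta_{i-1}^1$, which holds in $\Delta_1$. For $i=g_1+1$ it reads $\beta\delta_0^2<n_{g_1}^1\,\alpha\,\delta_{g_1}^1=m\,n_{g_1}(\Delta_1)\,\delta_{g_1}^1\,\delta_0^2$, i.e.\ exactly the standing hypothesis $\beta<m\,n_{g_1}(\Delta_1)\,\delta_{g_1}^1$. For $i=g_1+2$ (present only if $g_2\ge1$) it becomes $\delta_1^2<m\,\delta_0^2$, which follows from $\delta_1^2<n_0^2\delta_0^2=\delta_0^2$ together with $m>1$; and for $i=g_1+1+k$ with $2\le k\le g_2$ it reduces after cancelling $\beta$ to $\delta_k^2<n_{k-1}^2\delta_{k-1}^2$, valid in $\Delta_2$. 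The degenerate cases where $\Delta_1$ or $\Delta_2$ is the trivial $\delta$-sequence $(1)$ are subsumed by the same computations, with several index ranges becoming empty. I do not expect a genuine obstacle here: the only steps needing care are the $d$-sequence identity $d_{g_1+1+k}(\Delta)=d_k^2$ --- which is exactly where $\gcd(\alpha,\beta)=1$ and the factorization $\alpha=m\delta_0^2$ enter, in order to control the $\gcd$ --- and the bookkeeping at the junction index $g_1+1$, where the two remaining hypotheses ($\beta\in S_{\Delta_1}$ for Condition (2), and the inequality $\beta<m\,n_{g_1}(\Delta_1)\,\delta_{g_1}^1$ for Condition (3)) each get used precisely once.
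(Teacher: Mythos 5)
Your proposal is correct and follows essentially the same route as the paper: both compute the $d$- and $n$-sequences of the glued sequence in terms of those of $\Delta_1$ and $\Delta_2$ (obtaining $n_{g_1+1}(\Delta)=m$ and the inherited $n$-values elsewhere), and then check Conditions (1)--(3) of Definition \ref{delta}, with the hypotheses $\beta\in S_{\Delta_1}$ and $\beta<m\,n_{g_1}(\Delta_1)\,\delta_{g_1}^1$ entering exactly once each at the junction index. Your explicit treatment of the index $g_1+2$ (where $\delta_1^2<m\delta_0^2$ is needed) is a small point the paper subsumes without comment, but it changes nothing substantive.
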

%\textcolor{red}{Puede ser $\beta$ un elemento de $\Delta_1$, entonces no sería gluing, no?}
\begin{proof}
Write $\Delta=(\delta_0,\ldots,\delta_{g_1+g_2+1})$ and consider its attached $d$-sequence $$d_i=\gcd(\delta_0,\ldots,\delta_i), \, i\in [g_1+g_2+2],$$ and its $n$-sequence $n_i=d_i/d_{i+1}$, $i\in [g_1+g_2+1]$. The $d$- and $n$-sequences of $\Delta$, $\Delta_1$ and $\Delta_2$ are related as follows:
\begin{equation}\label{eq:thm_gluing_deltaseq}
     \begin{array}{c}
        d_i=\alpha\,d_i(\Delta_1), \text{ for } i\in[g_1+1]; \; n_i= n_i(\Delta_1), \text{ for } i\in[g_1]; \\[2mm]
        d_i=\gcd(\alpha,d_{i-g_1-1}(\Delta_2)), \text{ for }i= g_1+2,\ldots , g_1+g_2+2;\\[2mm]  n_{g_1+1}=\dfrac{d_{g_1+1}}{d_{g_1+2}}=\dfrac{\alpha}{\gcd(\alpha,d_1(\Delta_2))},\; \text{ and } n_i= \dfrac{\gcd(\alpha,d_{i-g_1-1}(\Delta_2))}{\gcd(\alpha,d_{i-g_1}(\Delta_2))},
     \end{array}
     \end{equation}
     for $i=g_1 + 2,\ldots ,g_1+g_2+1$.

     It remains only to prove that $\Delta$ is a $\delta$-sequence. %The fact that $\Delta$ is minimal follows from \cite[Lemma 9.8]{RosGarsan}, since the semigroup $S_\Delta$ of $\Delta$ is minimally generated by the elements shown in the statement. As a result, we get a minimal $\delta$-sequence in the process of gluing two $\delta$-sequences. Now, we prove that $\Delta$ is a $\delta$-sequence.
     Using our hypotheses, we obtain
     $$
     \begin{array}{c}
        d_i=d_{i-g_1-1}(\Delta_2), \text{ for }g_1+2\leq i\leq g_1+g_2+2,\  n_{g_1+1}=m \\[2mm] \text{ and } n_i= n_{i-g_1-1}(\Delta_2),  \text{ for } g_1 + 2\leq i\leq g_1+g_2+1.
     \end{array}
     $$

Then, since $\Delta_1$ and $\Delta_2$ are $\delta$-sequences, Condition (a), as well as Conditions (b) and (c) of Definition \ref{delta}, are satisfied for all indices except $i=g_1+1$. To finish, we verify these two last conditions for $i=g_1+1$. Indeed, regarding Condition (b), we must show that $n_{g_1+1}\delta_{g_1+1}\in\langle\delta_0,\ldots,\delta_{g_1}\rangle$ i.e.~that $\alpha\,\beta \in\langle \alpha\delta_0^1,\ldots,\alpha\delta_{g_1}^1\rangle$, which holds because $\beta\in S_{\Delta_1}$. To conclude, we verify Condition (c) by checking the inequality $\delta_{g_1+1}<n_{g_1}\delta_{g_1}$, which is true because, by hypothesis,  $\beta<m\,n_{g_1}(\Delta_1)\,\delta_{g_1}^1$.

\end{proof}

\begin{de}
Given two $\delta$-sequences $\Delta_1$ and $\Delta_2$ and two positive integers $\alpha,\beta$ as in Theorem \ref{thm:gluing_delta}, the $\delta$-sequence $\Delta$ constructed in that theorem is called the \emph{gluing of $\Delta_1$ and $\Delta_2$ through $\alpha$ and $\beta$}. We denote $\Delta$ by $\Delta_1\cup_{\alpha,\beta}\Delta_2$.
\end{de}

The $\delta$-sequence $\Delta_1$ in Example \ref{ex:new} can be obtained by considering the sequence $\Gamma_1=(3,2),$ the trivial one and gluing both through $\alpha=4$ and $\beta=9.$  Next, we present a more complete example of how to construct a $\delta$-sequence by using the gluing procedure.
\begin{example}
Consider the $\delta$-sequences $\Delta_1=(27,18,21,13)$ and $\Delta_2=( 20,8,15)$. By Theorem \ref{thm:gluing_delta}, we can get a $\delta$-sequence $\Delta$ as follows. Set $m=2$, then $\alpha=2 \cdot 20 =40$. Since $n_{g_1}(\Delta_1)=3$, we may choose $\beta=31$ and therefore
$$
\Delta= \Delta_1\cup_{40,31}\Delta_2 = (1080,720,840,520,620,248,465)
$$
is a $\delta$-sequence.
\end{example}

The following result proves that any $\delta$-sequence can be constructed via gluing.

\begin{co} \label{el35}
Let $\Delta=(\delta_0,\ldots,\delta_{g})$ be a $\delta$-sequence. Then, $\Delta$ can be computed as
\[
\Delta=\left(\ldots\left(\left(T\bigcup_{\substack{n_1,a_1}}T\right)\bigcup_{n_2,a_2}T\right)\ldots\right)\bigcup_{n_g,a_g}T,
\]
where $T$ is the trivial $\delta$-sequence and $(a_i)_{i=1}^g$ and $(n_i)_{i=1}^g$ the sequences associated to $\Delta$ given by Proposition \ref{alternate}.
\end{co}
\begin{proof}
By Proposition \ref{alternate}, the elements in $\Delta$ can be expressed as products of elements in  certain sequences of integers $(n_i)$ and $(a_i)$, $i \in [0,g]$, which satisfy  the properties stated therein. It follows that $T\cup_{n_1,a_1}T=\Delta_1=(n_1,a_1)$ is a $\delta$-sequence. Now, consider $\alpha = n_2$ and $\beta =a_2$; it is clear that $\beta \in S_{\Delta_1}$. By Theorem \ref{thm:gluing_delta}, $\Delta_1\cup_{n_2,a_2}T=\Delta_2=(n_1n_2, a_1\,n_2 , a_2)$ is a $\delta$-sequence. Finally, iterating this procedure, one gets $\Delta=(\delta_0,\ldots,\delta_{g})$ which concludes the proof.
%
%Consider  $\Delta_1$ and the trivial $\delta$-sequence, and set $\alpha = n_2$ and $\beta =a_2$; it is clear that $\beta \in S_{\Delta_1}$. By Theorem \ref{thm:gluing_delta}, the corresponding gluing $\Delta_2 = \{n_1n_2, n_2 a_1, a_2\}$ is a $\delta$-sequence. Finally, iterating this procedure, one gets $\Delta=(\delta_0,\ldots,\delta_{g})$ and concludes the proof.
\end{proof}

%Algoritmo de construcción de delta sucesiones. !!

Based on the previous result, we present the forthcoming Algorithm \ref{algo_disjdecomp}, which allows us to construct any $\delta$-sequence. Notice that, the output depends on choices allowing for the generation of all different $\delta$-sequences.
\medskip

\RestyleAlgo{ruled}
\begin{algorithm}[h!]
\setstretch{1.}
\SetKwComment{Comment}{/* }{ */}
%\SetAlgoSkip{bigskip}
\SetAlgoInsideSkip{medskip}
\SetKwInOut{Input}{Input}\SetKwInOut{Output}{Output}\SetKwFunction{Length}{Length}
\SetKwFunction{Add}{Add}\SetKwFunction{NS}{in NumericalSemigroup}
\Input{$(\Delta,(\alpha,\beta)),$ where $\Delta$ is a $\delta$-sequence and $(\alpha,\beta)$ a pair of positive integers.}
\tcc{$\Delta$ can be the trivial $\delta$-sequence $T$}
\Output{$\Delta'$, which is the $\delta$-sequence that glues $\Delta$ and $T$ through $\alpha$ and $\beta$.}
\BlankLine
\If{$\alpha \leq 1$}{return False;}
\If{$\gcd(\alpha,\beta)>1$}{return False;}
\If{$\beta$ \NS($\Delta$) =$ \mathrm{False}$}{return False;}
$n:=\gcd(\delta_0,\delta_1,\ldots,\delta_{g-1})$; \\
\eIf{$\beta \geq n\, \alpha\, \delta_{g}$}{return False;}{return  $(\alpha\delta_0,\alpha\delta_1,\ldots,\alpha\delta_g,\beta)$;}
\caption{Construction of a $\delta$-sequence from a pair of positive integers}\label{algo_disjdecomp}
\end{algorithm}

\medskip

%Algoritmo de pegado de dos delta-sucesiones:

%\textbf{Input 1.2}: Two  $\delta$-sequences as $\Delta_1$ and $\Delta_2$ as in \eqref{eq:alg_input}.
%\medskip

%\textbf{Output 1.2}: A $\delta$-sequence as $\Delta$ in \eqref{eq:alg_output_2}. Let $b$ and $c$ be two positive integers. In this case, we need to fix a positive integer $m>1$ such that $c=m\prod_{i=1}^{g_2}n_i(\Delta_2)$ belongs to $S_{\Delta_2}\setminus\{\delta_0^2,\ldots,\delta_{g_2}^2\}.$ In addition, the value $b$ must be a positive integer in $S_{\Delta_1}\setminus\{\delta_0^1,\ldots,\delta_{g_1}^1\}$ and satisfying the conditions
%$$
%b<m\,n_{g_1}(\Delta_1)\,\delta_{g_1}^1 \text{ and }\gcd(b,c)=1.
%$$

We conclude this section by presenting a method to obtain all $\delta$-sequences with a fixed first element $\delta_0$ of reasonable size.
To do so, it is sufficient to perform the following two steps.
\begin{enumerate}
\item[\textbf{(I)}] Compute all factorizations of $\delta_0$. Each of them yields a $n$-sequence.
\item[\textbf{(II)}] For each of these $n$-sequences $(n_1,n_2,\ldots , n_g)$, calculate the family of sequences $\{(a^k_1, a^k_2,\ldots , a^k_g)\}_{k=1}^{t}$ introduced in Proposition \ref{alternate}.
\end{enumerate}
Let us describe those families $\{(a^k_1, a^k_2,\ldots , a^k_g)\}_{k=1}^{t}$ corresponding to a given $n$-sequence $(n_1,n_2, \ldots , n_g)$.

\begin{enumerate}
\item[\textbf{(II.1)}] Choose $a^k_1$ such that $a^k_1<n_1$ and $\gcd(a^k_1,n_1)=1$. There are finitely many va\-lues which will determine the values $a^k_1$ whose index $k$ depends on the remaining choices.
\item[\textbf{(II.2)}] Choose $a^k_2$ such that $a^k_2\in\langle n_1, a^k_1\rangle, a^k_2<a^k_1n_1n_2$ and $\gcd(a^k_2,n_2)=1$. As above, one has finitely many choices for $a^k_2$.
\item[\textbf{(II.3)}] Iteratively, for $i=3,\ldots,g$, we choose $a^k_i$ such that
$$
\begin{array}{c}
a^k_i\in\langle n_1n_2\cdots n_{i-1}, a^k_1n_2\cdots n_{i-1},\ldots, a^k_{i-2}n_{i-1},a^k_{i-1}\rangle, \\[2mm]   a^k_i<a^k_{i-1}n_{i-1}n_{i}\text{ and }\gcd(a^k_i,n_i)=1.\end{array}
$$
%and $\gcd(a^k_i,n_i)=1$, for $i=2,\ldots,g$.
\end{enumerate}

In this way, running over all the possible choices, one gets the complete finite family of $\delta$-sequences
$$
(\delta_0,\delta_1^k,\ldots , \delta_g^k) = (n_1n_2\cdots n_{g}, a^k_1n_2\cdots n_{g},\ldots, a^k_{g-1}n_{g},a^k_{g}),
$$
$k=1,2,\ldots , t$, whose first element is $\delta_0$.
\medskip

Let $X$ be a surface at infinity and denote by $\nu$ its valuation (of $\gp^2$) at infinity. As we said above, the semigroup at infinity $S_{\nu,\infty}$ is generated by a $\delta$-sequence of type A, $\Lambda = (\delta_0,\ldots,\delta_{g},\delta_{g+1})$,
$\Delta=(\delta_0,\ldots,\delta_{g})$ being {\it the $\delta$-sequence of $\nu$}. Since $\Delta$ encodes important geometrical information, we fix the numerical semigroup $S = S_\Delta$ generated by $\Delta$ ---which we call the {\it $\delta$-semigroup of $\nu$ (or of $X$)}. \emph{Our goal is to identify surfaces at infinity, with different $\delta$-sequences of type A, whose $\delta$-semigroup is $S$}. Then, from an algebraic point of view, our problem is reduced to obtain as much information as possible regarding the $\delta$-sequences that generate the same numerical semigroup $S$.
\medskip

Denote by $\mathcal{S}$ the set of surfaces at infinity sharing the same $\delta$-semigroup $S$. For $ X \in \mathcal{S}$, let $\Delta_X$ denote its $\delta$-sequence. Given $ X, X' \in \mathcal{S}$, we say that $X$ is {\it less intricate} than $X'$ if $\# \Delta_X < \# \Delta_{X'}$. We introduce this terminology to organize the surfaces within each one class $\mathcal{S}$ defined by different $\delta$-semigroups. For instance, any surface $X$ in Example \ref{ex:new} is less intricate than any surface $Y$ in the same example. Note that both surfaces have the same $\delta$-semigroup. We can analogous define {\it less intricate} for curves with only one place at infinity that share the same semigroup at infinity.

%\rojo{Idea de algoritmo para obtener todas las $\delta$-secuencias con $\delta_0$ fijo. Computacionalmente es costoso al haber múltiples posibilidades y el cálculo de factorización.}

\section{Shortening $\delta$-sequences for a fixed $\delta$-semigroup} \label{Sec:minimal_sec}

In this section, we consider a semigroup $S$ as above and \emph{investigate how to shorten the $\delta$-sequences that generate $S$}. The subsequent section will address the opposite problem: starting with a $\delta$-sequence $\Delta$ generating $S$, we will search for  $\delta$-sequences with cardinality greater than that of $\Delta$, all generating the same semigroup $S$.
\medskip

When searching for $\delta$-sequences as short as possible, the ideal case occurs when the $\delta$-sequence is a minimal generating set of $S$.

 %First, we start with a $\delta$-sequence $\Delta$ which allows us \emph{somehow} to obtain the rest of the $\delta$-sequences generating the same semigroup as $\Delta$. This leads to the construction of another $\delta$-sequence $\Delta'$ generating the same semigroup $S_{\Delta}$ as $\Delta$ but having the least possible number of generators. As we will see,  this sequence may be formed just by the minimal system of generators of $S_\Delta$.
\begin{de}
Let $\Delta$ be a $\delta$-sequence (in $\mathbb{N}_{>0}$) and set $S_\Delta$ be the numerical semigroup that generates. $\Delta$ is called an MG$\delta$-sequence whenever its elements constitute a minimal system of generators of $S_\Delta$.
\end{de}

\begin{re}
Consider the $\delta$-sequence  $\Delta = (30,15,12,10)$ and the semigroup that generates $S = S_\Delta$. In this case, $(15,12,10)$ is an MG$\delta$-sequence that generates $S$. However, it is not the unique MG$\delta$-sequence that generates $S$. For instance $(15,10,12)$ and $(12,10,15)$ are also MG$\delta$-sequences that generate $S$.
\end{re}

Let $\Delta$ be a $\delta$-sequence that generates a semigroup $S = S_\Delta$. To obtain $\delta$-sequences with lower cardinality than that of $\Delta$, we seek a procedure to successively remove elements from $\Delta$ such that the semigroup remains  unchanged and the condition of being $\delta$-sequence is preserved. The following result gives interesting information regarding the preservation of the semigroup.

\begin{lem}\label{lemma:prop3Sathaye}
Let $\Delta=(\delta_0,\delta_1,\ldots,\delta_g)$ be a $\delta$-sequence.
If $\delta_i$ can be written as a sum of two elements belonging to the semigroup generated by $\Delta,$  then there exists an index $j >i$  such that $\delta_i=n_j\delta_j$.
\end{lem}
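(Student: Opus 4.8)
The statement is essentially a reformulation of Sathaye's classical observation on telescopic (free) semigroups, so the plan is to exploit the telescopic structure of a $\delta$-sequence. Recall that, by Condition (2) in Definition \ref{delta}, for each $i\in[g]$ the element $n_i\delta_i$ lies in $\langle\delta_0,\ldots,\delta_{i-1}\rangle$; combined with $d_{g+1}=1$ this makes $S_\Delta$ free in the sense of Bertin--Carbonne, hence every element of $S_\Delta$ admits a \emph{standard representation} $\sum_{j=0}^{g}c_j\delta_j$ with $0\le c_j<n_j$ for $j\in[g]$ (the coefficient $c_0$ being unrestricted), and this standard representation is unique. I would first recall or prove this uniqueness; it follows by the usual descent: if $\sum c_j\delta_j=\sum c'_j\delta_j$ are two standard expressions, reduce modulo $d_g=\gcd(\delta_0,\ldots,\delta_{g-1})$ to pin down $c_g\equiv c'_g$, and since both are in $[0,n_g)$ they are equal; then cancel $c_g\delta_g$ and iterate with $d_{g-1}$, etc.

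\textbf{Main step.} Suppose $\delta_i=a+b$ with $a,b\in S_\Delta\setminus\{0\}$, each written in standard form $a=\sum_{j=0}^g \alpha_j\delta_j$, $b=\sum_{j=0}^g\beta_j\delta_j$. Then $\delta_i=\sum_j(\alpha_j+\beta_j)\delta_j$, which is one representation of $\delta_i$ but possibly not the standard one because some $\alpha_j+\beta_j$ may reach $n_j$. The element $\delta_i$ itself has the trivial standard representation $1\cdot\delta_i$ (note $1<n_i$ by Condition (1)). The plan is to carry the sum $\sum_j(\alpha_j+\beta_j)\delta_j$ into standard form by repeatedly replacing $n_j\delta_j$ (whenever a coefficient hits or exceeds $n_j$) by its standard expression in $\delta_0,\ldots,\delta_{j-1}$, always working from the \emph{highest} index downward; by uniqueness the end result must be $1\cdot\delta_i$. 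Because the reduction process only ever \emph{lowers} indices, and $\delta_i$ survives at index $i$ with coefficient $1$, one concludes that every "carry" must have fed down to index $i$ and stopped there — in particular the largest index $j$ with $\alpha_j+\beta_j\neq 0$, call it $j_0$, must satisfy $j_0>i$ (otherwise no carry into $i$ could be produced and $\delta_i$ could not be reconstructed, using $a,b\neq 0$ and Condition (3) $\delta_i<n_{i-1}\delta_{i-1}$ to forbid a "trivial" split). Chasing this more carefully: at the top index $j_0$ one must have $\alpha_{j_0}+\beta_{j_0}=n_{j_0}$ exactly (it cannot exceed $n_{j_0}$ since each summand coefficient is $<n_{j_0}$, it cannot be $0$, and if it were strictly between it would survive into the standard form of $\delta_i$ at index $j_0>i$, contradicting uniqueness); hence $\delta_i - (a + b - n_{j_0}\delta_{j_0})$-type bookkeeping shows $\delta_i + (\text{lower terms}) = n_{j_0}\delta_{j_0} + (\text{lower terms})$, and one shows inductively all the "lower terms" cancel, leaving $\delta_i=n_{j_0}\delta_{j_0}$ with $j=j_0>i$ as required.

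\textbf{Expected obstacle.} The delicate point is the bookkeeping in the reduction-to-standard-form argument: showing that the carries propagate downward in a controlled way and that at each stage the coefficient that "overflows" is exactly $n_j$ (not more), so that after subtracting the standard expansions nothing is left over except a clean single term $n_{j_0}\delta_{j_0}$. This is where Condition (3), $\delta_i<n_{i-1}\delta_{i-1}$ for all $i$, does real work: it prevents the decomposition $\delta_i=a+b$ from being realized with all mass already below index $i$, and more generally controls the sizes so that the standard representation of the intermediate sums behaves as claimed. An alternative, perhaps cleaner, route is a downward induction on $g$: if the top index $j_0$ with nonzero coefficient in $a+b$ equals $g$, argue directly using $d_g$; if $j_0<g$, note that $\delta_i,a,b$ all lie in the semigroup generated by the shorter $\delta$-sequence $(\delta_0,\ldots,\delta_{g-1})$ (this itself requires checking, via the telescopic property, that omitting $\delta_g$ changes neither the relevant membership nor the $\delta$-sequence axioms for the truncation), and invoke the inductive hypothesis. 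I would present the proof via this induction on $g$, with the base case $g\le 1$ being immediate, since it makes the carry-propagation argument local and avoids tracking a long chain of reductions at once.
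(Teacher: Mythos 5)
The paper does not actually prove this lemma; it just cites Statement (5) of Sathaye--Stenerson, so any self-contained argument is necessarily a different route. Your framework (uniqueness of the standard representation $\sum c_j\delta_j$ with $0\le c_j<n_j$ for $j\ge 1$, followed by a carry/descent argument) is the right one, but the main step as you state it is false: it is not true that $\delta_i=n_{j_0}\delta_{j_0}$ for $j_0$ the largest index at which $\alpha_{j_0}+\beta_{j_0}\neq 0$, and the ``lower terms'' need not cancel at that stage. Take the paper's own example $\Delta=(225,150,30,12,10)$, with $n$-sequence $(3,5,5,3)$, and $i=1$: one has $150=10+140$ with both summands in $S_\Delta$, and the standard forms are $10=\delta_4$ and $140=4\delta_2+2\delta_4$, so $j_0=4$ and $\alpha_4+\beta_4=3=n_4$ exactly, as you predict; but $n_4\delta_4=30\neq 150$ and the leftover $4\delta_2=120$ does not cancel. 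The correct index is $j=2$ (indeed $150=n_2\delta_2=5\cdot 30$), and it only emerges after a further substitution: $150=n_4\delta_4+4\delta_2=\delta_2+4\delta_2=5\delta_2$.

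The repair is to run the descent to its end rather than stopping at $j_0$. Repeatedly let $j_t$ be the top index of the current nonnegative representation of $\delta_i$; as long as $j_t>i$, reduction modulo $d_{j_t}$ gives $n_{j_t}\mid c_{j_t}$ (here the coefficient may exceed $n_{j_t}$ in later rounds, so you only get divisibility, not equality), and you replace $c_{j_t}\delta_{j_t}$ by $c_{j_t}/n_{j_t}$ copies of the expression of $n_{j_t}\delta_{j_t}$ in $\langle\delta_0,\ldots,\delta_{j_t-1}\rangle$ guaranteed by Condition (2). The top index strictly decreases, and once it is $\le i$ the representation must be the trivial one $1\cdot\delta_i$ (top index $<i$ would give $d_i\mid\delta_i$, i.e.\ $n_i=1$). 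Inspecting the final substitution --- the one producing $1\cdot\delta_i$ from a representation with top index $j>i$ --- and using that all coefficients are nonnegative integers, one finds that that penultimate representation had to be exactly $n_j\delta_j$; this $j$, the top index of the \emph{last} nontrivial stage, is the index the lemma asserts. Your alternative induction on $g$ can also be made to work, but note that the truncation $(\delta_0,\ldots,\delta_{g-1})$ is not itself a $\delta$-sequence (its last $d$-value is $n_g>1$), so you must divide through by $d_g$ first; and in the case where the top index equals $g$ you must allow a nonzero remainder $R=\delta_i-n_g\delta_g$ and feed $\delta_i=n_g\delta_g+R$ back into the inductive hypothesis rather than concluding $R=0$.
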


\begin{proof}
See the proof of Statement (5) in \cite[page~13]{SatSte}.
\end{proof}

\begin{re}
Lemma \ref{lemma:prop3Sathaye} does not imply that, for every $\delta_j$ dividing $\delta_i$, the equality $\delta_i=n_j\delta_j$ holds. Indeed, consider the $\delta$-sequence $\Delta=(225,150,30,12,10)$. We get
$$
   \delta_1=150= 5\cdot \delta_2 = 15 \cdot \delta_4.
$$
Then $n_2=5$ as Lemma \ref{lemma:prop3Sathaye} states, but $n_4=3\neq 15$.
\end{re}

A characterization of MG$\delta$-sequences is given in our next result.

\begin{prop}
Let $\Delta=(\delta_i)_{i=0}^g$ be a $\delta$-sequence with $g\geq 2$, and keep the notation as in Proposition \ref{alternate}. Then the following statements are equivalent:
\begin{itemize}
\item[(a)] The $\delta$-sequence $\Delta$ is an {\rm MG}$\delta$-sequence.
\item[(b)] There is no indices $0\leq i< j\leq g$ such that $\delta_i=n_j\delta_j$.
\item[(c)] For all $j\in [g]$ and $0\leq i\leq j-1,$ we have $a_j\neq a_i\prod_{k=i+1}^{j-1}n_k$.
\end{itemize}
%$\Delta$ is an {\rm MG}$\delta$-sequence if, and only if,  for all $i\in\{1,\ldots,g\}$, it holds that $a_i\neq a_{i-1}$ and, when $i\geq 2,$  $a_i\neq a_\ell\prod_{k=\ell+1}^{i-1}n_k$, for all $\ell\in\{0,\ldots,i-2\}$.
\end{prop}

\begin{proof}
First, notice that the equivalence between Items (a) and (b) follows from Lemma \ref{lemma:prop3Sathaye}. To prove the equivalence between Items (b) and (c), it suffices to show that the equality $\delta_i=n_j\delta_j$ is equivalent to the expression $a_j= a_i\prod_{k=i+1}^{j-1}n_k$. Indeed, by Proposition \ref{alternate}, it holds that
$$
\delta_i=a_i\prod_{k=i+1}^gn_k \text{ and }n_j\delta_j=n_ja_j\prod_{k=j+1}^gn_k.
$$
Therefore, $\delta_i$ and $n_j\delta_j$ share a common factor, $\prod_{k=j}^gn_k$, which completes the proof.
%First we assume that $\Delta$ is an MG$\delta$-sequence. Reasoning by contradiction, suppose that either $a_i=a_{i-1}$, for some $i \geq 2$, or $a_i= a_\ell\prod_{k=\ell+1}^{i-1}n_k$ for some $\ell$ and $i$ as in the statement. Then it holds that either $$\delta_{i-1}=a_{i-1}\prod_{k=i}^{g}n_k=n_i\,a_i\prod_{k=i+1}^{g}n_k=n_i\delta_i,$$ or $$\delta_\ell= a_\ell\prod_{k=\ell+1}^{g}n_k=n_i\,a_i\prod_{k=i+1}^{g}n_k=n_i\delta_i,$$ which is a contradiction.
%%
%Conversely, suppose that the arithmetic conditions in the statement hold. Again we reason by contradiction. Assume that $\Delta$ is not an MG$\delta$-sequence. Then, by Lemma \ref{lemma:prop3Sathaye}, there exist two integers $i,j$ such that $i<j$ and $\delta_i=n_j\delta_j$ which gives a contradiction by Proposition \ref{alternate}.
\end{proof}

\begin{re}
Given a $\delta$-sequence, deleting the elements that are a multiple of another element can violate the condition of being $\delta$-sequence. For instance, for the $\delta$-sequence $$\Delta=(225,150,30,12,10)$$ we cannot eliminate $30=3\cdot 10$, as the resulting sequence $(225,150,12,10)$ is not a $\delta$-sequence (nor the sequence $(225,12,10)$). However $\Delta'=(225,10,12)$ is a $\delta$-sequence and the semigroup $S=S_\Delta$ generated by $\Delta$ admits an MG$\delta$-sequence. Thus, the least intricate surfaces at infinity (respectively, curves with only one place at infinity) whose $\delta$-semigroup (respectively, semigroup at infinity) is $S$ are those associated with the $\delta$-sequence $\Delta'$.
\end{re}

\begin{re}
\label{nominimal}
Unfortunately, a semigroup $S$ spanned by a $\delta$-sequence does not always admit an MG$\delta$-sequence as a generating set. For example, $\Delta=( 6750, 6075, 1425, 950, 1215,$ $ 2852 )$ is a $\delta$-sequence where $6075$ is a multiple of $1215$, but erasing $6075$, no rearrangement of the remaining elements forms a $\delta$-sequence.
\end{re}

Remark \ref{nominimal} motivates the following definition. Recall that $S_\Delta$ denotes the semigroup generated by $\Delta$.

\begin{de}
A $\delta$-sequence $\Delta=(\delta_i)_{i=0}^g$ is said to {\it primitive} if there is no other $\delta$-sequence $\Delta'=(\delta_i')_{i=0}^{g'}$ such that $g' <g$ and $S_\Delta = S_{\Delta'}$.
\end{de}
%However, the occurring multiples cannot be arbitrary:

%\subsection{\rojo{Construction of primitive $\delta$-sequences}}
We now propose a strategy for removing an element from a $\delta$-sequence while ensuring the resulting sequence remains a $\delta$-sequence.
%Now we study the effect on the $d$ and $n$-sequences of a $\delta$-sequence when removing a multiple of some element $\delta_j$.

\medskip

Let $\Delta=(\delta_0,\delta_1,\ldots,\delta_g)$ be a $\delta$-sequence such that $\delta_i=n_j \delta_j$ for two indices $i$ and $j$ such that  $0\leq i < j \leq g$. The forthcoming Lemma \ref{lema:alpha} and Theorem \ref{prop:remove_delta} distinguish four cases:
\begin{itemize}
\item[\textbf{(I)}] $i\geq 1$.
\item[\textbf{(II)}] $i=0,j\neq 1$ and $\delta_1<\delta_j$.
\item[\textbf{(III)}] $i=0,j\neq 1$ and $\delta_1>\delta_j$.
\item[\textbf{(IV)}] $i=0$ and $j=1$.
\end{itemize}
%Cases \textbf{(I)}, \textbf{(II)} and \textbf{(III)} allows us to assume that $\Delta$ is non-principal as the principal case is covered by Case \textbf{(IV)}.}
As Case \textbf{(IV)} covers the case when $\Delta$ is principal, in Cases \textbf{(I)}, \textbf{(II)} and \textbf{(III)} we assume that $\Delta$ is non-principal. In Cases \textbf{(I)}, \textbf{(II)} and \textbf{(IV)}, we define the sequence $\Delta '= (\delta_0', \delta_1', \ldots , \delta_{g-1}')$ as follows (as long as the subindices make sense):
\[
\delta_k'=\delta_k \mbox{ for } k=0,\ldots, i-1,i+1,  \ldots, j-1, \delta_i'=\delta_j, \mbox{ and } \delta_{k} '= \delta_{k + 1} \mbox{ for } k=j,\ldots, g-1.
\]
Otherwise (Case \textbf{(III)}), we define $\Delta'=(\delta_1, \delta_j, \delta_2, \ldots , \check{\delta}_j, \ldots, \delta_g)$, where $\check{\delta}_j$ denotes the removal of $\delta_j$. Set $(d_k)_{k=1}^{g+1}$ and $(n_k)_{k=1}^{g}$ (respectively,  $(d_k')_{k=1}^g$ and $(n_k')_{k=1}^{g-1}$) the $d(\Lambda)$ and $n(\Lambda)$-sequences (respectively, $d(\Lambda')$ and $n(\Lambda')$-sequences). The following lemma describes the relationship between these sequences under the proposed removal strategy.

%Let us show the effect on the $d$ and $n$-sequences when applying the previous removing strategy.}

%Consider the sequence $
%\Delta '= (\delta_0', \delta_1', \ldots , \delta_{g-1}')$
%defined as follows $\delta_k'=\delta_k$ for $k=0,\ldots, i-1,i+1,$ $ \ldots, j-1$, $\delta_i'=\delta_j$, and $\delta_{k} '= \delta_{k + 1}$ for $k=j,\ldots , g-1$. When $i=0$ and $\delta_j <\delta_1$, we define $\Delta'=(\delta_1, \delta_j, \delta_2, \ldots , \check{\delta}_j, \ldots, \delta_g)$, where $\check{\delta}_j$ means that we erase $\delta_j$. Set $(d_k)_{k=1}^g$ and $(n_k)_{k=1}^{g+1}$ (respectively,  $(d_k')_{k=1}^g$ and $(n_k')_{k=1}^{g+1}$) the $d(\Lambda)$ and $n(\Lambda)$-sequences (respectively, $d(\Lambda')$ and $n(\Lambda')$-sequences). \red{Let us show the effect on the $d$ and $n$-sequences when applying the previous strategy.}

\begin{lem}\label{lema:alpha}
Let $\Delta$ and $\Delta'$ be two sequences defined above. Depending on the case the $d'$-sequence and $n'$-sequence are related to the $d$-sequence and $n$-sequence as follows:

\begin{enumerate}
\item Case \textbf{(I)}:
\begin{align*}
&d_k= d_k', \ \ \ \ \ \ \ \ \mbox{for} \ \ \ k=1,\ldots , i; \\
&d_k= n_j d_k', \ \ \ \ \mbox{for} \ \  k=i+1,\ldots , j; \\
&d_{k+1}= d_{k}', \ \ \ \ \mbox{for} \ \  k=j+1,\ldots , g.\\
\end{align*}

\begin{align*}
&\mbox{For $k=1,\ldots, i-1$, we have $n_k=n_k'$;} \\
&\mbox{For $k=i$, we have $n_i'=n_j n_i$;}\\
&\mbox{For $k=i+1,\ldots, j-1$, we have $n_k=n_k'$;}\\
&\mbox{For $k=j,\ldots, g-1$, we have $n_{k +1}=n_{k}'$.}
\end{align*}

\item Case \textbf{(II)}:
%If \red{$i=0,j\neq 1$} and $\delta_1 < \delta_j$:
\begin{align*}
&d_{k}= n_j d_{k}', \ \ \ \ \mbox{for} \ \  k=1,\ldots, j;  \\
&d_{k+1}= d_{k}', \ \ \ \ \mbox{for} \ \  k=j+1,\ldots, g.
\end{align*}

\begin{align*}
&\mbox{For $k=1,\ldots, j-1$, we have $n_k=n_k'$;}\\
&\mbox{For $k=j,\ldots, g-1$, we have $n_{k +1}=n_{k}'$.}
\end{align*}

\item Case \textbf{(III)}:
%If \red{$i=0,j\neq 1$} and $\delta_1 > \delta_j$:
\begin{align*}
&d_{1}'= \delta_1;  \\
&d_{k}= n_j d_{k}', \ \ \ \ \mbox{for} \ \ k=2,\ldots, j;  \\
&d_{k+1}= d_{k}', \ \  \ \ \mbox{for} \  \ k=j+1,\ldots, g.
\end{align*}

\begin{align*}
&\mbox{For $k=1$, we have the identity $n_{1}'= a_1 n_j$,}\\
&\mbox{where $a_1$ is the value given in Proposition \ref{alternate};}\\
&\mbox{For $k=2,\ldots , j-1$, we have $n_k=n_k'$;}\\
&\mbox{For $k=j,\ldots , g-1$, we have $n_{k +1}=n_{k}'$.}
\end{align*}

\item Case \textbf{(IV)}:
%If $i=0$ and \red{$j=1$}:
\begin{align*}
&\mbox{$d_k' = d_{k+1}$, for $k=1,\ldots, g$ and $n_k' = n_{k+1}$, for $k=1,\ldots, g-1.$}
\end{align*}

\end{enumerate}
\end{lem}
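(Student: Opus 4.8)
The statement is a bookkeeping lemma: we must track how the gcd's $d_k$ (and hence the ratios $n_k = d_k/d_{k+1}$) change when we excise a redundant generator $\delta_i = n_j\delta_j$ from a $\delta$-sequence and re-insert $\delta_j$ in its slot. The whole proof is a direct computation from the definition $d_k(\Lambda) = \gcd(\delta_0,\dots,\delta_{k-1})$, split into the four cases according to where the removed index $i$ sits and, when $i=0$, whether $\Delta$ is principal and how $\delta_1$ compares with $\delta_j$. The plan is to treat Case (1) carefully and then indicate how Cases (2)--(4) follow by the same mechanism with minor adjustments.

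\emph{Case (1), $i\ge 1$.} Here $\Delta' = (\delta_0,\dots,\delta_{i-1},\delta_j,\delta_{i+1},\dots,\delta_{j-1},\delta_{j+1},\dots,\delta_g)$. For $k\le i$ the first $k$ entries of $\Delta$ and $\Delta'$ coincide, so $d_k = d_k'$ at once. For $i+1\le k\le j$, the first $k$ entries of $\Delta'$ are $\delta_0,\dots,\delta_{i-1},\delta_j,\delta_{i+1},\dots,\delta_{k-1}$, i.e.\ the same as the first $k$ entries of $\Delta$ except that $\delta_i$ is replaced by $\delta_j$; since $\delta_i = n_j\delta_j$, we have $\gcd$ over the $\Delta'$-list equal to $\gcd(\delta_j,\ \gcd_{\text{others}})$, while the $\Delta$-list gives $\gcd(n_j\delta_j,\ \gcd_{\text{others}})$. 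The key observation is that $\delta_j \mid$ each of $\delta_0,\dots,\delta_{j-1}$ is \emph{false} in general, but $d_{j+1}(\Delta) = \gcd(\delta_0,\dots,\delta_j) = d_j(\Delta')$ forces the cofactor relation: indeed $\gcd(n_j\delta_j, N) = n_j\cdot\gcd(\delta_j, N)$ precisely when $\gcd(n_j, N/\gcd(\delta_j,N))$ behaves correctly, and this is exactly guaranteed because in the original $\delta$-sequence $n_j = d_j(\Delta)/d_{j+1}(\Delta)$ divides $\gcd(\delta_0,\dots,\delta_{j-1})/\gcd(\delta_0,\dots,\delta_j)$ — so all the prime factors of $n_j$ already divide $\gcd(\delta_0,\dots,\delta_{i-1},\delta_{i+1},\dots,\delta_{k-1})$. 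Hence $d_k(\Delta) = n_j\, d_k(\Delta')$ for $i+1\le k\le j$. Finally for $k\ge j+1$ the first $k+1$ entries of $\Delta$ contain both $\delta_i=n_j\delta_j$ and $\delta_j$, so the $n_j$ disappears and $d_{k+1}(\Delta) = \gcd(\delta_0,\dots,\delta_k) = \gcd(\delta_0,\dots,\check{\delta_i},\dots,\delta_k) = d_k(\Delta')$. The $n$-sequence statements then follow by taking ratios: $n_k = n_k'$ where consecutive $d$'s scale by the same factor, and $n_i' = d_i'/d_{i+1}' = (d_i/1)\big/(d_{i+1}/n_j) = n_j\, d_i/d_{i+1} = n_j n_i$ at the single index $i$ where the scaling factor jumps.

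\emph{Cases (2), (3), (4), $i = 0$.} Now $\delta_i = \delta_0 = n_j\delta_j$, so $\delta_j \mid \delta_0$, i.e.\ $j$ is an index with $\delta_j$ dividing $\delta_0$. In Case (4), $\Delta$ principal means $\delta_1 \mid \delta_0$, and one checks that simply $\Delta' = (\delta_1,\dots,\delta_g)$ works: $d_k'(\Delta') = \gcd(\delta_1,\dots,\delta_k) = \gcd(\delta_0,\delta_1,\dots,\delta_k) = d_{k+1}(\Delta)$ since $\delta_0$ is a multiple of $\delta_1$, hence of $\gcd(\delta_1,\dots,\delta_k)$; ratios give $n_k' = n_{k+1}$. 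In Cases (2) and (3), $\Delta$ non-principal, so $\delta_1\nmid\delta_0$ and the first element of $\Delta'$ is $\delta_1$ (with $\delta_j$ placed second). For $1\le k\le j$ the leading $k$-fold gcd of $\Delta'$ is $\gcd(\delta_1,\delta_j,\delta_2,\dots)$ and of $\Delta$ is $\gcd(\delta_0,\delta_1,\dots) = \gcd(n_j\delta_j,\delta_1,\dots)$; the same cofactor argument as in Case (1) gives $d_k(\Delta) = n_j\, d_k(\Delta')$, except that when $\delta_1 > \delta_j$ the very first term $d_1'(\Delta') = \delta_1$ is not scaled (Case (3)), which produces the anomalous $n_1' = a_1 n_j$ (here $a_1$ is from Proposition \ref{alternate}, $\delta_1 = a_1\prod_{k=2}^g n_k$, so $d_2(\Delta) = \gcd(\delta_0,\delta_1) = \prod_{k=2}^g n_k\cdot n_j / (\dots)$ unwinds to $d_2(\Delta)/d_1'(\Delta') = a_1 n_j$ after dividing out); when $\delta_1 < \delta_j$ the first term is already the right one and Case (2) is uniform. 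For $k\ge j+1$, as before both $\delta_0$ and $\delta_j$ appear, killing the $n_j$ and giving $d_{k+1}(\Delta) = d_k(\Delta')$.

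\emph{Main obstacle.} The one genuinely non-routine point is the cofactor identity $\gcd(n_j\delta_j, N) = n_j\gcd(\delta_j, N)$ used throughout; this is not true for arbitrary integers and must be justified from the $\delta$-sequence structure — specifically from the fact that in $\Delta$ every prime dividing $n_j = d_j/d_{j+1}$ already divides $d_j = \gcd(\delta_0,\dots,\delta_{j-1})$ and survives into the gcd of any sub-list of $\delta_0,\dots,\delta_{j-1}$ that still contains enough terms (which is the case for $k\le j$ because the excised index $i<j$ leaves the indices $i+1,\dots,j-1$, and in the $i=0$ case leaves $\delta_1,\dots,\delta_{j-1}$, intact). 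Once this divisibility is pinned down, every other equality in the lemma is immediate by evaluating gcd's of prefixes and dividing consecutive ones; I would spell out the prime-by-prime argument once, in Case (1), and invoke it verbatim in the remaining cases.
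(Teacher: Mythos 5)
Your overall strategy is the same as the paper's: compute the prefix gcds $d_k$ and $d_k'$ directly from the definition, observe that they agree for $k\le i$ and for $k\ge j+1$ (where $\delta_i=n_j\delta_j$ is redundant because $\delta_j$ already appears), and show that they differ by the factor $n_j$ in the middle range; the statements about the $n$-sequences then follow by taking ratios of consecutive $d$'s, and the $i=0$ subcases are handled by the same mechanism. Your case analysis and the ratio computations (e.g.\ $n_i'=n_jn_i$ at the one index where the scaling jumps, and $n_1'=a_1n_j$ in case (3)) are all correct and match the paper.

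The one step that would fail as written is your justification of the central identity $\gcd(n_j\delta_j,N)=n_j\gcd(\delta_j,N)$, where $N$ denotes the gcd of the remaining prefix entries. You claim this is ``exactly guaranteed'' because every prime factor of $n_j$ already divides $N$. That condition is not sufficient: take $n_j=2$, $\delta_j=2$, $N=2$; then $\gcd(n_j\delta_j,N)=\gcd(4,2)=2$ while $n_j\gcd(\delta_j,N)=4$, even though the only prime of $n_j$ divides $N$. What the identity actually requires is that, for every prime $p$ dividing $n_j$, the full power of $p$ dividing $n_j\delta_j$ already divides $N$. This stronger fact does hold here, but for reasons you never invoke: by Proposition \ref{alternate}, $\delta_j=a_j\prod_{m>j}n_m$ with $\gcd(a_j,n_j)=1$, so for $p\mid n_j$ the $p$-part of $n_j\delta_j$ equals the $p$-part of $n_j\prod_{m>j}n_m$; and every $\delta_\ell$ entering $N$ has $\ell<j$ (since $k\le j$), hence is divisible by $\prod_{m>\ell}n_m$, which contains $n_j\prod_{m>j}n_m$ as a factor. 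Therefore $N$ is divisible by the $p$-part of $n_j\delta_j$ for each such $p$, and the identity follows. The coprimality $\gcd(a_j,n_j)=1$ is indispensable and is precisely what the paper is using when it first factors $n_j$ out of every entry, writing $d_k=n_j\gcd(\delta_0/n_j,\dots,\delta_j,\dots,\delta_{k-1}/n_j)$, and then appeals to Proposition \ref{alternate} to remove the divisions again. With this repair your plan goes through; without it, the prime-by-prime argument you defer to does not close.
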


\begin{proof}
%Observe that $\alpha=n_j$ by Lemma \ref{lema:alpha}.
Assume first we are in Case \textbf{(I)} ($i\geq 1$). The facts that  $d_k=d'_k$ for $k=1,\ldots , i$ and $d_{k+1}=d'_{k}$ for $k=j+1,\ldots , g$ follow by comparison of the $d(\Delta)$ and $d(\Delta')$-sequences.
For $k=i+1,\ldots , j$, we have
\begin{multline*}
 d_k= \mathrm{gcd}(\delta_0,\ldots , \delta_{i-1}, \delta_i=n_j\delta_j,\ldots , \delta_{k-1}) = n_j \cdot \mathrm{gcd}(\delta_0/n_j,\ldots , \delta_{i-1}/n_j, \delta_j,\ldots , \delta_{k-1}/n_j)\\
%=&n_j \mathrm{gcd}(\delta_0/n_j,\ldots , \delta_{i-1}/n_j, \delta_j,\ldots , \delta_{h-1}/n_j) \\
= n_j \cdot \mathrm{gcd}(\delta_0,\ldots , \delta_{i-1}, \delta_{j},\ldots , \delta_{k-1}) = n_j\cdot d'_k,
\end{multline*}
where the two last equalities follow from Proposition \ref{alternate}.

\medskip

Regarding the $n(\Delta)$ and $n(\Delta')$-sequences, it follows by definition that $n_k=n'_k$ for $k \in \{1,\ldots , i-1\} \cup \{ i+1,\ldots,j-1 \}$ and $n_{k+1}=n'_{k}$ for $k \in \{j+1,\ldots, g-1\}$. It only remains to study the cases $k=i$ and $k=j$. Both cases can be reasoned analogously. Let us see the case when $k=i$:
$$
n_i=\frac{d_i}{d_{i+1}}=\frac{\mathrm{gcd}(\delta_0,\ldots , \delta_{i-1})}{\mathrm{gcd}(\delta_0\ldots , \delta_i)}\cdot\frac{\mathrm{gcd}(\delta_0,\ldots , \delta_{i-1},\delta_j)}{\mathrm{gcd}(\delta_0,\ldots , \delta_{i-1},\delta_j)}=n_i' \cdot \frac{\mathrm{gcd}(\delta_0,\ldots , \delta_{i-1},\delta_j)}{\mathrm{gcd}(\delta_0,\ldots , \delta_{i-1},\delta_i)}=\frac{n_i'}{n_j},
$$
as desired.

%The case $i=0$ follows similarly.
The remaining cases (where $i=0$) follow similarly. Specifically, if $\delta_j > \delta_1$ (Case\textbf{(II)}), then $d_1'=\delta_j$, which implies $d_1=\delta_0=n_j \delta_j=n_j d_1'$. If $\delta_j<\delta_1$ (Case\textbf{(III)}), $d_1'=\delta_1, d_2=n_j d_2'$. By Proposition \ref{alternate}, $n_1'=n_j \delta_1/d_2 = a_1 n_j$.
\end{proof}
%\begin{re}
   % Notice that, when $\Delta$ is principal and $i=0$, it is clear that $d_{i+1}=d_i$ for $i=1,\ldots,g$ and $n_{i+1}=n_i$ for $i=1,\ldots,g-1.$
%\end{re}

%The operation in Lemma \ref{lema:alpha} does not always give a $\delta$-sequence; this is the case only under the following conditions.

Our next result establishes a condition under which $\Delta'$ is a $\delta$-sequence.

%shows when the procedure described in Lemma \ref{lema:alpha} gives rise to a $\delta$-sequence.

\begin{theo}\label{prop:remove_delta}
Let $\Delta=(\delta_0,\ldots, \delta_g)$ be a $\delta$-sequence such that $\delta_i=n_j \delta_j$ for two indices $i$ and $j$ such that $0\leq i < j \leq g$. If $j <g$, assume further that $\delta_{j+1}<n_{j-1}\delta_{j-1}$. Then, the sequence
$$
\Delta '= (\delta_0', \delta_1', \ldots , \delta_{g-1}'),
$$
defined prior to Lemma \ref{lema:alpha}, is also a $\delta$-sequence.
\end{theo}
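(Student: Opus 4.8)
The plan is to verify directly that $\Delta'$ fulfils the three conditions of Definition \ref{delta}, using the explicit comparison between the $d$- and $n$-sequences of $\Delta$ and those of $\Delta'$ supplied by Lemma \ref{lema:alpha}. That lemma splits into four cases ($i\geq 1$; $i=0$ with $\Delta$ non-principal and $\delta_1<\delta_j$; $i=0$ with $\Delta$ non-principal and $\delta_1>\delta_j$; and $i=0$ with $\Delta$ principal), so I would run the argument case by case; since the reasoning is uniform, I would present the representative case $i\geq 1$ in detail and indicate the routine modifications in the three remaining cases. Throughout, I would also use Proposition \ref{alternate} to rewrite the $\delta_k$, $n_k$, $a_k$ whenever convenient.

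For Condition (1), note that in every case the last relation of the relevant part of Lemma \ref{lema:alpha} identifies $d'_g$ with $d_{g+1}=1$, so $\Delta'$ has final $d'$-index equal to $1$; and each entry $n'_k$ of its $n'$-sequence equals either some $n_\ell$ with $\ell\in[g]$, or the product $n_jn_i$ (the entry at the former position of $\delta_i$ when $i\geq 1$), or $a_1n_j$ (the first entry when $i=0$ and $\delta_1>\delta_j$); since $n_j>1$ as $j\in[g]$, and $n_i,a_1\geq 1$ by Proposition \ref{alternate}, all of these exceed $1$. For Condition (2), write $\ell(k)$ for the index of $\Delta$ occupying position $k$ in $\Delta'$; away from the position now holding the relocated element $\delta_j$ we have $n'_k\delta'_k=n_{\ell(k)}\delta_{\ell(k)}\in\langle\delta_0,\ldots,\delta_{\ell(k)-1}\rangle$ by Condition (2) for $\Delta$, and since $\{\delta'_0,\ldots,\delta'_{k-1}\}$ arises from a set containing $\{\delta_0,\ldots,\delta_{\ell(k)-1}\}$ by at most replacing $\delta_i$ by $\delta_j$, and $\delta_i=n_j\delta_j\in\langle\delta_j\rangle$, the generated semigroup does not shrink, whence $n'_k\delta'_k\in\langle\delta'_0,\ldots,\delta'_{k-1}\rangle$. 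At the position holding $\delta_j$ one computes $n'_k\delta'_k=(n_in_j)\delta_j=n_i\delta_i\in\langle\delta_0,\ldots,\delta_{i-1}\rangle=\langle\delta'_0,\ldots,\delta'_{i-1}\rangle$ (when $i=0$ and $\delta_1>\delta_j$ one uses instead the identity $n'_1\delta'_1=a_1n_j\delta_j=a_1\delta_0=n_1\delta_1$).

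Condition (3) is the crux, and it is where the extra hypothesis $\delta_{j+1}<n_{j-1}\delta_{j-1}$ enters. For each $k\in[g-1]$ the inequality $\delta'_k<n'_{k-1}\delta'_{k-1}$ to be proved becomes, after substituting the relations of Lemma \ref{lema:alpha}: at positions not adjacent to the surgery, literally $\delta_\ell<n_{\ell-1}\delta_{\ell-1}$ for some $\ell\in[g]$, i.e.\ Condition (3) for $\Delta$; at the position now occupied by $\delta_j$ (namely $k=i$ when $i\geq 1$), the inequality $\delta_j<n_{i-1}\delta_{i-1}$, which follows from $\delta_j<\delta_i<n_{i-1}\delta_{i-1}$ using $n_j>1$ and Condition (3) for $\Delta$; at the next position ($k=i+1$), the inequality $\delta_{i+1}<(n_in_j)\delta_j=n_i\delta_i$, again Condition (3) for $\Delta$; and at position $k=j$, which is present only when $j<g$ since otherwise $j\notin[g-1]$, precisely the inequality $\delta_{j+1}<n_{j-1}\delta_{j-1}$, i.e.\ the extra hypothesis. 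When $i=0$ the only additional point is the inequality at $k=1$: in the sub-case $\delta_1<\delta_j$ it reads $\delta_1<\delta_j$, in the sub-case $\delta_1>\delta_j$ it reads $\delta_j<\delta_1$ (both the defining assumption of the sub-case, the inequality at $k=2$ in the latter sub-case being $\delta_2<a_1n_j\delta_j=n_1\delta_1$), and the principal sub-case is handled the same way through part (4) of Lemma \ref{lema:alpha}. The main obstacle I expect is precisely this bookkeeping around the surgery site: one must be sure that, after all the re-indexing, the single genuinely new inequality produced by deleting $\delta_i=n_j\delta_j$ and relocating $\delta_j$ is exactly $\delta_{j+1}<n_{j-1}\delta_{j-1}$ — which is the reason that hypothesis is imposed; every other condition is a transcription of the corresponding property of $\Delta$ through the dictionary of Lemma \ref{lema:alpha}.
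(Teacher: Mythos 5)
Your proposal is correct and follows essentially the same route as the paper's proof: both rely on the dictionary of Lemma \ref{lema:alpha} to transport the $d$- and $n$-sequences, and both reduce Condition (3) to the three ``new'' inequalities at the surgery site (the one at position $j$ being exactly the extra hypothesis $\delta_{j+1}<n_{j-1}\delta_{j-1}$), the rest being transcriptions of Condition (3) for $\Delta$. The only divergence is Condition (2), where you give a direct containment argument using $\delta_i=n_j\delta_j$ in place of the paper's appeal to a transposition together with Proposition 50 of Shor; your version is self-contained and equally valid.
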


\begin{proof}
We must show that $\Delta'$ satisfies Conditions (a), (b) and (c) in Definition \ref{delta}. Condition (a) is deduced from Lemma \ref{lema:alpha} and the fact that $\Delta$ is a $\delta$-sequence. By applying the transposition $(i \ j)$, $\Delta'$ satisfies Condition (b) according to \cite[Proposition 50]{Shor}.

Finally, we verify  Condition (c), namely the inequality $\delta_k' < n_{k-1}'\delta_{k-1}'$ for $k\in [g-1]$. Assume first that we are in Case \textbf{(I)} as introduced before Lemma \ref{lema:alpha}. We demonstrate this inequality for the different values of $k$ by grouping them into the following subcases: %We start for proving this condition in the following particular cases:
\begin{itemize}
\item[(i)] $k=i$. %$\delta_i'<n_{i-1}'\delta_{i-1}'$,
\item[(ii)] $k=i+1$. %$\delta_{i+1}'<n_i'\delta_{i}'$, and
\item[(iii)] $k=j$. %$\delta_{j}'<n_{j-1}'\delta_{j-1}'$, whenever $j<g$.
\item[(iv)] $k\in \{1\ldots,g-1\}\setminus\{i,i+1,j\}$.
\end{itemize}
In Subcase (i), by Lemma \ref{lema:alpha} we must prove that $\delta_j<n_{i-1}\delta_{i-1}$. This inequality holds because $\delta_j<n_j\delta_j=\delta_i<n_{i-1}\delta_{i-1}$. In Subcase (ii), we should show that $\delta_{i+1}<n_jn_i\delta_j$ which is true because $n_j\delta_j=\delta_i$. In Subcase (iii), the inequality is $\delta_{j+1}<n_{j-1}\delta_{j-1}$, which holds by hypothesis. Only remains to prove Subcase (iv). By Lemma \ref{lema:alpha}, for $k=0, \ldots , i-2$ (only if $i>1$), we have that
$$
\delta_{k+1}'<n_{k}'\delta_{k}' \text{ is equivalent to } \delta_{k+1}<n_{k}\delta_{k}.
$$

In addition, for $i+1<k<j-1$ (only if $j-i>4$ ),
$$
\delta_k'<n_{k-1}'\delta_{k-1}' \text{ means }\delta_k<n_{k-1}\delta_{k-1}
$$
and, in a  similar way, for $j< k \leq g-1$ (only if $j<g-1$), we have that
$$
\delta_k'<n_{k-1}'\delta_{k-1}' \text{ is equivalent to } \delta_{k+1}<n_{k}\delta_{k}.
$$
Since $\Delta$ is a $\delta$-sequence, all the above inequalities in subcase (iv) are satisfied. This proves  the theorem for Case \textbf{(I)}.

The proofs of the remaining cases described before Lemma \ref{lema:alpha} are analogous with the exception of
%($i\geq 1$)} is proved. To conclude, notice that only
 Case \textbf{(III)} regarding the inequality $\delta_2'<n_1'\delta_1'$.
%when \red{$i=0,j\neq 1$ and $\delta_j<\delta_1$ (Case \textbf{(III)})} requires a different proof.
In this specific situation, it suffices to prove that $\delta_2<a_1n_j\delta_j$, which is true since
$$
a_1n_j\delta_j=a_1\delta_0=n_1\delta_1
$$
by Proposition \ref{alternate}. This completes the proof.
\end{proof}

\begin{re}\label{remark:deltaseq_nominimally_gene}
Considering the $\delta$-sequences $\Delta_1$ and $\Delta_2$ in Example \ref{ex:new}, Theorem \ref{prop:remove_delta} allows us to deduce that $\Delta_1$ is a $\delta$-sequence from the fact that $\Delta_2$ is one.

Moreover, the requirement $\delta_{j+1}<n_{j-1}\delta_{j-1}$ if $j<g$ in Theorem \ref{prop:remove_delta} is essential. For example, consider the $\delta$-sequence
$$
\Delta=(13860, 12474, 2926, 4389, 1134, 8779 ).
$$
Although $12474=\delta_1=n_4\delta_4= 11\cdot 1134$, the sequence $\Delta'= (13860, 1134, 2926, 4389,$ $ 8779 )$ is not a $\delta$-sequence since $8779=\delta_5\nless n_3\delta_3=2\cdot 4389=8778$, thus the simplification fails. A similar situation happens in Remark \ref{nominimal}.

However, the mentioned requirement is unnecessary when $j=g$ and it always holds when $j=i+1$. The first case is clear, while for the second one, it is required that $\delta_{i+2} < n_i \delta_i$. This last condition holds because $\Delta$ is a $\delta$-sequence implying $\delta_{i+2} < n_{i+1} \delta_{i+1} = \delta_i$.
\end{re}

As stated at the beginning of this section, our objective is to start with a $\delta$-sequence $\Delta$ and its generated semigroup $S_\Delta$, and find, if possible, shorter $\delta$-sequences that generate $S_\Delta$. Geometrically, we seek  surfaces at infinity or curves with only one place at infinity whose $\delta$-semigroup or semigroup at infinity is $S=S_\Delta$ and that they are successively less intricate. In this sense, the most desirable surfaces or curves are those associated with primitive $\delta$-sequences of $S$. While Theorem \ref{prop:remove_delta} represents a significant toward this goal, it does not always determine primitive $\delta$-sequences of $S$ as illustrated in the following example.

\begin{example}
Consider the $\delta$-sequence
$$
%\Delta=(1890,1260,1575,840,168,180,1250),(3,2,3,5,7,3)
(11760, 10290, 1575, 450, 2058, 7385)
$$
whose $n$-sequence is $(8,14,7,5,3)$. Here $$
%1260=\delta_1=n_5\delta_5=7\cdot 180 \text{ and }5\cdot 168=n_4\delta_4< 1250=\delta_6<7\cdot 180=n_5\delta_5,
10290=\delta_1=n_4\delta_4=5\cdot 2058 \text{ and }7\cdot 450 =n_3\delta_3<7385<5\cdot 2058=n_4\delta_4.
$$
This sequence does not satisfy the requirements in Theorem  \ref{prop:remove_delta}.

Indeed, the sequence $\Delta'$ in Theorem  \ref{prop:remove_delta},   %$(1890,180,1575,840,168,1250)$
$(11760, 2058, 1575, 450, 7385)$, is not a $\delta$-se\-quen\-ce. However, a different rearrangement:  %$$(1890,1260,1575,840,180,1250,168)$$
$$(11760, 2058, 1575, 7385,450) $$
is, in fact, a MG$\delta$-sequence %. Then, Theorem \ref{prop:remove_delta} can be applied and one gets a shorter $\delta$-sequence $$(1890,180,1575,840,1250,168)$$
that generates the same semigroup $S_\Delta$.
\end{example}

Given a $\delta$-semigroup $S$ (generated by an initial $\delta$-sequence $\Delta$), $\delta$-sequences significantly shorter than $\Delta$ and, in most cases primitive $\delta$-sequence generating $S$, can be obtained under an algorithm involving rearrangements of $\delta$-sequences. This approach is computationally feasible for $\delta$-sequences of moderate size (both in terms of the length of the sequence and the magnitude of the integers involved). Utilizing Theorem \ref{prop:remove_delta} minimizes the number of required operations and generally yields substantially shorter $\delta$-sequences that generate $S$.

A sketch of the algorithm would be starting with a $\delta$-sequence $\Delta$ and, use Theorem \ref{prop:remove_delta} to delete multiples of other elements in $\Delta$. It can be applied successively for several multiples $\delta_i$ satisfying the condition for $\delta_{j+1}$ given in the theorem whenever one starts with the rightmost $\delta_i$. Then, one obtains a $\delta$-sequence $\Delta'$ that no longer satisfies the hypothesis of Theorem \ref{prop:remove_delta}. If $\Delta'$ contains  no multiples of its other elements, the algorithm terminates. Otherwise, in a second step, one can rearrange the elements following the last existing multiple an reapply Theorem \ref{prop:remove_delta} where possible. Otherwise (or alternatively), one can delete a multiple $\delta_j$ and test various permutations to find (if it exists) a shorter $\delta$-sequence $\Delta''$. If one cannot find $\Delta''$, $\delta_j$ cannot be erased of $\Delta$. Repeating this procedure (applying Theorem \ref{prop:remove_delta} whenever possible) yields a $\delta$-sequence of minimal length generated solely by elements in $\Delta$. The $\delta$-sequence obtained by this algorithm is primitive if it contains, at most, a multiple of some $\delta_j$ that cannot be erased. Otherwise, there may exist a shorter $\delta$-sequence containing multiples of elements $\delta_j$ that are not in $\Delta$. Although this latter scenario seems possible, the authors have been unable to find an example due to its computational complexity.

\section{Enlarging $\delta$-sequences for a fixed $\delta$-semigroup } \label{sec:Nested_delta_sequences}

%Consider a surface at infinity $X$ whose attached $\delta$-sequence is $\Delta$. Let $S= S_\Delta$ be the semigroup generated by $\Delta$. In this section, we study \emph{surfaces at infinity whose $\delta$-semigroup is $S$ and are more intricate than $X$}.

In this section, we seek $\delta$-sequences that enlarge a given $\delta$-sequence $\Delta$ while generating the same semigroup as $\Delta$. Our enlargements preserve the ordering of the elements in $\Delta$. If desired, one may subsequently  perform rearrangements and verify whether the resulting sequences still satisfy the $\delta$-sequence condition. Geometrically, this corresponds to studying surfaces at infinity (or curves with only one place at infinity) that are more intricate than a previously fixed surface at infinity  (or curve with only one place at infinity), always sharing the same $\delta$-semigroup (or the same semigroup at infinity).

We begin by introducing  the concept of refinement of a $\delta$-sequence.

\begin{de}\label{25}
Let $\Delta = (\delta_i )_{i=0}^g$ and  $\Delta' = (\delta'_i )_{i=0}^{g'}$ be two $\delta$-sequences. The sequence $\Delta'$ is said to be a {\it refinement} of $\Delta$ if there exists a subset $\{ i_0 , i_1 , \ldots , i_{g} \}$ of $[0,g']$ with $i_0  <i_1 < \cdots <i_{g}$ such that $\delta'_{i_j}= \delta_j$ for every $j\in [0,g]$. %If the index subset is not assumed to be ordered, we speak of \emph{reordered refinement}.}
The {\it order of refinement} of $\Delta'$ with respect to $\Delta$ is defined as the cardinality the set $\Delta' \setminus \Delta$.
\end{de}

For instance, the $\delta$-sequence $\Delta'=(108,72,24,54,26,13)$ is a refinement of order $2$ of  $\Delta=(108,24,54,13)$. Note that, in this case,  $\Delta$ can be obtained from $\Delta'$ from the procedure given in Theorem \ref{prop:remove_delta}.

%By looking at the $n$-sequences of $\Delta'$ (respectively, $\Delta$) which are $(3,3,2,3,2)$ (respectively, $(9,2,6)$), the word ``refinement'' acquires its meaning: the former $n$-sequence refines the latter.
%\medskip

%The aim of this section is to explain how to find families of $\delta$-sequences generating the same semigroup at infinity.
In this section, we start with a $\delta$-sequence $\Delta$ and investigate the conditions under which elements can be added to $\Delta$ while preserving both the $\delta$-sequence property and the generated semigroup. It turns out that we can enlarge $\Delta$  in three different ways, which  can  be applied successively:

\begin{itemize}
\item[$\diamond$] Inserting a suitable sequence of positive integers between two generators.
\item[$\diamond$] Appending a suitable sequence of positive integers which are not multiples of $\delta_0$  before $\delta_0$.
\item[$\diamond$] Appending a suitable sequence of multiples of $\delta_0$  before $\delta_0$.
\end{itemize}

Let us study these operations. The following result characterizes the refinements of order one of a $\delta$-sequence that preserve the same semigroup. Given a positive integer $m$, $d(m)$ denotes some specific divisor of $m$.

\begin{theo}\label{thm:introducir_beta}
Let $\Delta=(\delta_i)_{i=0}^g$ be a $\delta$-sequence and consider  its decomposition as shown in Proposition \ref{alternate}. Let $\Delta'=(\delta_i')_{i=0}^{g+1}$ be a sequence in $\mathbb{N}_{>0}$. Denote by $(d_i')_{i=1}^{g+2}$ the $d(\Delta')$-sequence and by $(n_i')_{i=1}^{g+1}$ the $n(\Delta')$-sequence of $\Delta'$. Recall that $S_\Delta$ (respectively, $S_{\Delta'}$) denotes the semigroup generated by $\Delta$ (respectively, $\Delta'$) and, for $\Delta$ and $\Delta'$, keep the notation as in Proposition \ref{alternate}.
\begin{itemize}
\item[(a)] Assume that $\Delta'$ is of the form $(\beta,\delta_0,\delta_1,\ldots,\delta_g)$, then $\Delta'$ is a principal $\delta$-sequence such that $S_{\Delta'}=S_{\Delta}$ if and only if $\beta = \alpha \delta_0$ for some integer $\alpha>1$.

\item[(b)] Suppose again that $\Delta'$ is of the form $(\beta,\delta_0,\delta_1,\ldots,\delta_g)$, then $\Delta'$  is a non-principal $\delta$-sequence such that $S_{\Delta'}=S_{\Delta}$ if and only if there exists an index $j$, $1\leq j\leq g,$ such that the following conditions hold:
\begin{itemize}
\item[1.] The value $\beta$ can be expressed as
$$
\beta=n_{j+1}'\,\delta_{j+1}'=n_{j+1}'\,a_j\prod_{k=j+1}^g n_k.
$$

\item[2.] If $j=1$, then $n'_{k+1}=n_k,2\leq k\leq g.$ Additionally, $n_1=a_1'\cdot n_2'$ with $a_1',n_2'>1$ and $n_1'$ is a divisor of $a_1$ such that $a_1'<n_1'$ and $\gcd(a_1',n_1')=1$.
    %where the values $a_1$ and $a_1'$ satisfy $\delta_1=a_1\prod_{k=2}^gn_k$ and $\delta_1'=a_1'\prod_{k=2}^{g+1}n_k'.$

\item[3.] If $j\geq 2$, the value $a_j$ and the sequence $(n_k)_{k=1}^{j-1}$ satisfy
 $$\gcd\Big(a_j,\displaystyle\prod_{k=1}^{j-1}n_k\Big)>\gcd\Big(a_j,\displaystyle\prod_{k=2}^{j-1}n_k\Big)>\cdots > \gcd\left(a_j,n_{j-1}\right)>1.$$
Moreover, %$a_j>1,
$1<d(a_j)=n_1'<a_j$,
$n_{k+1}'=d(n_{k})>1$ for $1\leq k \leq j,$ and there is an index $k_0$, $1 \leq k_0 \leq j$, such that $d(n_{k_0})\neq n_{k_0}$. In addition, $n_{k+1}'=n_k$ for $j+1\leq k \leq g$.

\item[4.] $\beta>\delta_0$ and $\delta_k<n_k'\delta_{k-1},$ for $2\leq k \leq j+1$.

\item[5.]  $n_1'\delta_0$ is a multiple of  $\beta$ and, for $1\leq k \leq j$,  $n_{k+1}'\delta_k$ belongs to the semigroup $\langle \beta,\delta_0,\ldots,\delta_{k-1}\rangle$.
\end{itemize}

\item[(c)] Finally, assume that $\Delta'$ has the form $(\delta_0,\delta_1,\ldots,\delta_{h-1},\beta,\delta_h,\ldots,\delta_g)$ for some index $h, 1\leq h\leq g.$ Then, $\Delta'$ is a $\delta$-sequence such that $S_{\Delta'}=S_{\Delta}$ if and only if there exists an index $j$, $h\leq j\leq g,$ such that the following conditions are satisfied:
\begin{itemize}
\item[1.] The value $\beta$ can be expressed as
$$
\beta=n_{j+1}'\delta_{j+1}'=n_{j+1}'\,a_{j}\prod_{k=j+1}^g n_k.
$$
Recall that we set $\prod_{k=g+1}^g n_k=1$.
\item[2.] The value $a_j$ and the sequence $(n_k)_{k=h}^{j}$ satisfy
$$
\prod_{k=h}^{j}n_k>\gcd\Big(a_j,\prod_{k=h}^{j-1}n_k\Big)>\gcd\Big(a_j,\prod_{k=h+1}^{j-1}n_k\Big)>\cdots > \gcd\left(a_j,n_{j-1}\right)>1.
$$

Moreover, $n_k'=n_k,$ $1\leq k \leq h-1$ (only if $h>1$);
$$1<n_h'=d\left(\prod_{k=h}^j n_k\right)<\prod_{k=h}^j n_k;$$  $n'_{k+1}=d(n_k)>1$, for $h \leq k \leq j$, and there is an index $h \leq k_0 \leq j$ such that $d(n_{k_0})\neq n_{k_0}$. In addition, $n_{k+1}'=n_k,j+1\leq k \leq g$.

\item[3.] $\delta_h<n_h'\beta<n_h' n_{h-1} \delta_{h-1}$ and $\delta_k<n_k'\delta_{k-1}$ for $h+1 \leq k \leq j+1$.
\item[4.] Finally,
\begin{equation*}
\begin{array}{c}
n_h'\beta\in\langle \delta_0,\delta_1,\ldots,\delta_{h-1} \rangle, \ n_{h+1}'\delta_h\in\langle \delta_0,\delta_1,\ldots,\delta_{h-1},\beta \rangle \text{ and } \\[2mm]%\text{ and if } j \geq h+1,
n_k'\delta_{k-1}\in\langle \delta_0,\delta_1,\ldots,\delta_{h-1}, \beta,\delta_h,\ldots,\delta_{k-2}\rangle\text{ for } h+2\leq k \leq j+1 \text{ (only if } j \geq h+1).
\end{array}
\end{equation*}
\end{itemize}
\end{itemize}
\end{theo}
%
%\textcolor{blue}{CJ revisa esta prueba, hay que dejar claro que se prueban, usan las 5 o 4 condiciones. Faltan alguna frases en este sentido.}
%
%
\begin{proof}
We first show (a). Assume that $\beta = \alpha \delta_0$ for an integer $\alpha>1$.  Then, $$\Delta'= (\delta_0'=\alpha \delta_0, \delta_1'=\delta_0, \delta_2'=\delta_1, \ldots, \delta_{g+1}'=\delta_g)$$ is a $\delta$-sequence such that $S_{\Delta'} = S_\Delta$.  The facts that $d_i'=d_{i-1}$ for  $i=2,\ldots , g+2$, $n_1'=\alpha>1$ and $n_i'=n_{i-1}$ for $i=2,\ldots , g+1$ prove Items (a), (b) and (c) in Definition \ref{delta}. The converse is immediate.
\medskip

To prove (b), we initially assume that $\Delta'$ is a non-principal $\delta$-sequence such that $S_{\Delta'}=S_{\Delta}$ and prove that Conditions  1 through 5 in the statement hold. By Lemma \ref{lemma:prop3Sathaye}, there exists an index $j\in[g]$ such that $\beta=n_{j+1}'\delta_{j+1}'=n_{j+1}'\delta_{j}$. Moreover, by Proposition \ref{alternate}, the value $\beta$ can be written as $\beta= a_{j} n_{j+1}' \prod_{k=j+1}^g n_k$, satisfying Condition 1. Conditions 4 and 5  follow from the fact that $\Delta'$ is a $\delta$-sequence.
%Here $\delta_0'=\beta$ must be a multiple of $\delta_{j+1}'=\delta_j,$ for some index $j\in[g]$. Without loss of generality, we can assume that $\beta=n_{j+1}'\delta_{j}= a_{j} n_{j+1}' \displaystyle\prod_{k=j+1}^g n_k$ (see Lemma \ref{lemma:prop3Sathaye} and Proposition \ref{alternate}). Conditions 3 and 4  follow from the fact that $\Delta'$ is a $\delta$-sequence.
\medskip

Now we describe the $d(\Delta')$  and $n(\Delta')$-sequences of $\Delta'$ relative to the $d(\Delta)$ and $n(\Delta)$-sequences of $\Delta$. In both cases, $\beta$ is a multiple of $\delta_j;$ therefore
\begin{equation}\label{eq:corolario_beta_1}
d_{k+1}'=d_k, \text{ for } j+1\leq k\leq g+1, \text{ and }n_{k+1}'=n_k, \text{ for } j+1\leq k\leq g.
\end{equation}

Moreover, $d_1'=\beta$ and, since $\gcd(a_j,n_j)=1$ and $d_k=\prod_{\ell=k}^gn_\ell$, for $ 1 \leq k \leq g$, we have
%\begin{multline*}
%d_{j+1}'=\gcd\big(n_{j+1}'\delta_j,\red{\delta_1',\ldots,\delta_{j}'}\big)=\gcd\big(n_{j+1}'\delta_j,\gcd(\delta_0,\ldots,\red{\delta_{j-1}})\big)\\
%    =\gcd(n_{j+1}'a_j,n_j)\,\displaystyle\prod_{\ell=j+1}^gn_\ell %=  d(n_j)\,\displaystyle\prod_{\ell=j+1}^g n_\ell.
%\end{multline*}
%Furthermore, for $k\in[0,j-1]$,
\begin{multline*}
     d_{k+1}'=\gcd\big(n_{j+1}'\delta_j,\delta_1',\ldots,\delta_{k}'\big)=\gcd\big(n_{j+1}'\delta_j,
     \gcd\left(\delta_0,\ldots,\delta_{k-1}\right)\big)\\
     =\gcd\Big(n_{j+1}'a_j,\prod_{\ell=k}^{j}n_\ell\Big)\,\displaystyle\prod_{\ell=j+1}^gn_\ell,
\end{multline*}
where $k\in[0,j+1]$ and, by convention, we set $\prod_{\ell=j+1}^{j}n_\ell=1.$

In order to show that Condition 2 holds, assume $j=1$. From the above properties, it follows that $n'_1=d(a_1)$ and $n'_2=d(n_1)$. We prove the remaining properties asked in Condition 2. By Proposition \ref{alternate}, we have $a_1'<n_1'$ and $\gcd(a_1',n_1')=1.$ Also it holds that $\prod_{k=1}^gn_k=\delta_0=\delta_1'=a_1'\prod_{k=2}^gn_k'$, and then $n_1=a_1'\cdot n_2'$ by \eqref{eq:corolario_beta_1}. Both values $a_1'$ and $n_2'$ are strict bigger than 1 because $\Delta'$ is a non-principal $\delta$-sequence. Hence, the properties in Condition 2 are satisfied.

Now, assume $j>1$ to prove Condition 3. As a consequence of  \eqref{eq:corolario_beta_1},
\begin{equation}\label{eq:corolario_beta_2}
 n_{k+1}'=\dfrac{d_{k+1}'}{d_{k+2}'} = \dfrac{\gcd\Big(n_{j+1}'a_j,\displaystyle\prod_{\ell=k}^{j}n_\ell\Big)}{\gcd\Big(n_{j+1}'a_j,\displaystyle\prod_{\ell=k+1}^{j}n_\ell\Big)}=d(n_k),\; \text{ for }k\in[j].
\end{equation}
%where, by convention, we set $\prod_{\ell=j+1}^{j}n_\ell=1.$
In particular,
$$
d_2'=\gcd (\beta,\delta_0)=\gcd\Big(a_j,\prod_{\ell=1}^{j-1}n_\ell \Big)n_{j+1}'\prod_{\ell=j+1}^g n_\ell,
$$
which yields
\begin{equation}\label{eq_daj}
n_{1}'=\dfrac{d_1'}{d_2'}=d(a_j).
\end{equation}
Since $\Delta'$ is a $\delta$-sequence, we have  $d(a_j)=n_{1}'>1$ and $d(n_k)=n_{k+1}'>1$ for $k\in[j]$, and then
$$
\gcd\Big(a_j,\prod_{\ell=1}^{j-1}n_\ell\Big)>\gcd\Big(a_j,\prod_{\ell=2}^{j-1}n_\ell\Big)>\cdots > \gcd (a_j,n_{j-1} )>1.
$$
Finally,  it holds that $1<d(a_j)=n_1'<a_j$ and, as $\Delta'$ is non-principal, $d(n_{k_0})\neq n_{k_0}$ for some $k_0\in \{1,\ldots, j\}$. Indeed,
by Proposition \ref{alternate} one can deduce that
$$
\delta_0'=\prod_{k=1}^{g+1}n_k'=d(a_j)\,\prod_{k=1}^jd(n_k)\prod_{k=j+1}^gn_k \text{ and }\delta_0=\prod_{k=1}^gn_k.
$$
Moreover, if $d(n_k)=n_k$ for every $k\in[j]$, then $\delta_0'$ is a multiple of $\delta_0$ which is a contradiction. Similarly, if $d(a_j)=a_j$, then $\gcd\Big(a_j,\prod_{\ell=1}^{j-1}n_\ell\Big)=1$ by  \eqref{eq_daj}, a contradiction. Thus, the properties in Condition (3) hold.
 \medskip

To conclude the proof of (b), it suffices to note that Conditions 1 through 5 of the statement combined with the fact that $\Delta$ is a $\delta$-sequence and Equalities \eqref{eq:corolario_beta_1} and \eqref{eq:corolario_beta_2} prove that $\Delta'$ is a $\delta$-sequence.
\medskip

The conditions in Item (c) can be proved in the same way as those in Item (b).  We are more specific to show Condition 2. It is clear that $d_k'=d_k$ for $k\in [h]$, $n_k'=n_k$ for $k \in [h-1]$, and
$$
n_h'=\dfrac{d_h'}{d_{h+1}'}=\dfrac{\gcd\left(\delta_0,\delta_1,\ldots,\delta_{h-1}\right)}{\gcd\big(\delta_0,\delta_1,\ldots,\delta_{h-1},n_{j+1}'\delta_j\big)}=\dfrac{\displaystyle\prod_{k=h}^g n_k}{\displaystyle\prod_{k=j+1}^g n_k \gcd\Big(a_j\,n_{j+1}',\prod_{k=h}^j n_k\Big)}=d\Big(\displaystyle\prod_{k=h}^jn_k\Big).
$$
Also our previous reasoning also shows that $n_{k_{0}+1}'=d(n_{k_{0}})\neq n_{k_{0}}$ for some $k_{0}\in\{h,\ldots , j\}$, and $n_h'\neq\prod_{k=h}^jn_k$. Indeed,
$$
\prod_{k=1}^gn_k=\delta_0=\delta_0'=\prod_{k=1}^{g+1}n_k'=n_h'\,\prod_{k=1}^{h-1}n_k \,\prod_{k=h}^{j}d(n_k)\prod_{k=j+1}^g n_k,
$$
and if either $d(n_k)= n_k$ for all $h\leq k\leq j,$ or $n_h'=\prod_{k=h}^jn_k$, then  it follows that $\delta_0=\alpha\delta_0'$ with $\alpha>1$, which is a contradiction.

Conditions 1, 2, 3 and 4 in Item (c) are sufficient to deduce that $\Delta'$ is a $\delta$-sequence such that $S_{\Delta'}=S_{\Delta}$.
\end{proof}

Let $\Delta_1$ and $\Delta_2$ be the $\delta$-sequences from Example \ref{ex:new}. The value $\beta=\delta_2^2$ we added to $\Delta_1$ to obtain $\Delta_2$ satisfies the conditions established in Item (c) of Theorem \ref{thm:introducir_beta}.

To provide a more complex illustration of $\delta$-sequences $\Delta$ and $\Delta'$ as described in Item (c) of Theorem \ref{thm:introducir_beta}, consider the $\delta$-sequence
$$
\Delta=(1944,162,81,24,64),
$$
whose $n$-sequence is $(12,2,27,3)$. The sequence $\Delta'=(1944,72,162,81,24,64)$ is a refinement of order one of $\Delta$ whose $n$-sequence is $(27,4,2,3,3)$. Here, $\beta=72$, $h=1$ and $j=3$. Using the notation from Theorem \ref{thm:introducir_beta}, where for suitable indices $n_i=n_i(\Delta)$ and $n'_i=n_i(\Delta')$, we have $n'_1=27 = d(\prod_{k=1}^{3} n_k) = d( 648)$, $n'_2= 4 = d (n_1) = d(12)$, $n'_3= 2 = d (n_2) = d(2)$, $n'_4 = 3 = d(n_3) = d(27)$ and $n'_5= n_4=3$.
\medskip

\begin{re}
Corollary \ref{c21} provides an upper bound for the length of those $\delta$-sequences with the same first element $\delta_0$. Given such a $\delta$-sequence, Theorem \ref{thm:introducir_beta} (c) characterizes the $\delta$-sequences that arise by inserting  a suitable positive integer between two of its generators. Note that the resulting $\delta$-sequence may be principal or not.
\end{re}

%
%The following corollary is an immediate consequence of Theorem \ref{thm:introducir_beta} (b).
%
%\begin{co}
%Let $\Delta=(\delta_i)_{i=0}^g$ be a non-principal $\delta$-sequence. Then, $g$  is at most the number of prime factors of $\delta_0$ counting repetitions.
%\end{co}

A $\delta$-sequence $\Delta = (\delta_i )_{i=0}^g$ admits finitely many refinements whose first element is $\delta_0$. We seek sets  of sequences $\left\{\Xi_r = ( \delta'_{r,1},\delta'_{r,2}, \ldots, \delta'_{r,s}) \right\}_{r=0}^{g-1}$ such that
$$
\Delta' = (\delta_0, \Xi_0,\delta_1, \Xi_1, \ldots, \delta_r, \Xi_r, \ldots, \delta_{g-1},\Xi_{g-1}, \delta_g),
$$
is a $\delta$-sequence, where the sequences $\Xi_r$ may be empty. A strategy to obtain sequences $\Xi_r$ as above is to suppose %These sequences $\Xi_r$ can be subject to restrictions that facilitate to get $\delta$-sequences as $\Delta'$. For instance, one can assume
that the $d(\Delta)$-sequence is nested within the $d(\Delta')$-sequence though this is not always the case. Thus, if we choose the values $d_{r+1} = d_{r+1}(\Delta) $ and $d_{r+2} = d_{r+2}(\Delta)$ of $\Delta$, then the number of sequences of integer numbers satisfying the following successive strict divisibility conditions:
$$
d_{r+2} | c_0 | c_1 | \cdots | c_s | d_{r+1}
$$
is a bound on the number of sequences of values $d'_i = d_i (\Delta')$ corresponding to $\Delta'$ that one can insert between $d_{r+2}$ and $d_{r+1}$.

Under this assumption, for $j \in [s]$ the integers $\delta'_{r,j}$ must satisfy the equalities
$$
\gcd(\delta_0, \ldots, \delta_r, \delta'_{r,1} \ldots, \delta'_{r,j})=c_{s-j} \mbox{\ and\ } \gcd(\delta_0, \ldots, \delta_r, \delta'_{r,1}, \ldots, \delta'_{r,s}, \delta_{r+1})=d_{r+2}.
$$
Consequently, $\delta'_{r,j}$ cannot belong to the semigroup generated by the set $ \{ \delta_0, \ldots, \delta_r \}$ as that would imply $\gcd(\delta_0, \ldots, \delta_r, \delta'_{r,j}) = d_{r+1}$, which is impossible. Finally, for each choice $d_{r+2} | c_0 |  \cdots | c_s | d_{r+1}$, the values $\delta'_{r,j}$ must satisfy the growth conditions:
$$
\delta'_{r,1} < n_r \delta_r, \;\; \delta'_{r,2} < \frac{d_{r+1}}{c_s} \delta'_{r,1}, \; \ldots, \; \delta'_{r,s} <\frac{c_0}{d_{r+2}} \delta'_{r,s-1}.
$$

For example, considering the $\delta$-sequence $\Delta = (\delta_0=768, \delta_1=80, \delta_2=15)$, it holds that
$$
\Delta' = (\delta_0,\Xi_0,\delta_1,\delta_2)
$$
where $\Xi_0=(\delta_{01},\delta_{02},\delta_{03},\delta_{04})$ and $\delta_{01}=384$, $\delta_{02}=192$, $\delta_{03}=96,\delta_{04}=32$  is again a $\delta$-sequence.
%}
%\end{re}

\medskip

To conclude this section we show that the refinement procedure cannot be infinite with the exception of the successive repetition of the procedure in Item (a) of Theorem \ref{thm:introducir_beta}.

\medskip
We begin by defining the concept of a \emph{nested} family of $\delta$-sequences.

\begin{de}\label{de27}
A {\it nested} family of $\delta$-sequences is a (finite or infinite) sequence $\{\Delta_i\}_{i\geq 0}$ of $\delta$-sequences such that each $\Delta_{i+1}$ is a refinement of order 1 of  $\Delta_{i}$.
\end{de}

%\edz{La definición y el resto de resultados es para refinement. ¿Habría que hacerlo para reordered refinement?}

%The following result shows that, for a fixed a curve $C$ as above with only one place at infinity, there exist infinitely many curves $C'$ with only one place at infinite whose equisingularity class at $p$ is different from $C$ and both semigroups at infinity $S_{C,\infty}$ and $S_{C',\infty}$ coincide. However, as shown in Corollary \ref{c21}, the number of those curves is finite whenever we restrict our search to curves $C'$ whose degree is less than (or equal to) the degree of $C$.

\begin{co}\label{theo:nested}
Let $\Delta=(\delta_0, \delta_1, \ldots, \delta_g)$ be a $\delta$-sequence. Then, there exists an infinite  nested family of $\delta$-sequences $\{\Delta_i\}_{i \geq 0}$ such that $\Delta_0 =\Delta$ and $S_{\Delta_i} = S_\Delta$  for all $i\geq 0$. %As a consequence, if $C$ is a curve with only one place at infinite and $S_{C,\infty}=S_\Delta$, then there also exists a curve $C'$ with only one place at infinite such that $S_{C',\infty} = S_\Delta$ and whose attached $\delta$-sequence is arbitrarily large, and therefore having an arbitrarily large number of approximates.
\end{co}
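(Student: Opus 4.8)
The statement asks for an \emph{infinite} nested family, so the key observation is that Item (a) of Theorem \ref{thm:introducir_beta} never terminates: given any $\delta$-sequence $\Delta=(\delta_0,\dots,\delta_g)$, the sequence $\Delta'=(\alpha\delta_0,\delta_0,\delta_1,\dots,\delta_g)$ is again a $\delta$-sequence for every integer $\alpha>1$, and it spans the same numerical semigroup as $\Delta$ because the new first element $\alpha\delta_0$ lies in $\langle\delta_0\rangle\subseteq S_\Delta$ and all the old generators are still present. Thus the plan is to iterate this construction. First I would set $\Delta_0:=\Delta$ and, choosing once and for all some fixed integer $\alpha>1$ (say $\alpha=2$), define recursively $\Delta_{i+1}:=(\alpha\,\delta_0^{(i)},\,\delta_0^{(i)},\,\delta_1^{(i)},\dots)$ where $\Delta_i=(\delta_0^{(i)},\delta_1^{(i)},\dots)$; equivalently, $\delta_0^{(i)}=\alpha^i\delta_0$ and the tail of $\Delta_i$ past its first entry is exactly $\Delta_{i-1}$.

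Second, I would verify the three required properties. That each $\Delta_{i+1}$ is a $\delta$-sequence and that $\Delta_{i+1}$ is a refinement of order $1$ of $\Delta_i$ is precisely the content of Theorem \ref{thm:introducir_beta}(a): with $\beta=\alpha\delta_0^{(i)}$ we are in the ``principal'' case, so $\Delta_{i+1}$ is a $\delta$-sequence, and by Definition \ref{25} it is a refinement of $\Delta_i$ (the subset $\{1,2,\dots,g_i+1\}$ of indices picks out the old entries) whose order of refinement is $\#(\Delta_{i+1}\setminus\Delta_i)=1$. Hence $\{\Delta_i\}_{i\geq0}$ is a nested family in the sense of Definition \ref{de27}. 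Third, the equality of semigroups $S_{\Delta_i}=S_{\Delta}$ follows by induction on $i$: $S_{\Delta_0}=S_\Delta$ trivially, and if $S_{\Delta_i}=S_\Delta$ then $S_{\Delta_{i+1}}$ is generated by $\alpha\delta_0^{(i)}$ together with all generators of $\Delta_i$; since $\alpha\delta_0^{(i)}=\alpha^{i+1}\delta_0\in\langle\delta_0\rangle\subseteq S_{\Delta_i}$, adding it changes nothing, so $S_{\Delta_{i+1}}=S_{\Delta_i}=S_\Delta$. (Alternatively this is already recorded inside the proof of Theorem \ref{thm:introducir_beta}(a).) Finally, the family is genuinely infinite because the first elements $\alpha^i\delta_0$ are strictly increasing, so the $\Delta_i$ are pairwise distinct.

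There is essentially no obstacle here: the corollary is a direct packaging of Theorem \ref{thm:introducir_beta}(a) iterated infinitely, together with the elementary remark that prepending a multiple of the current leading generator leaves the generated semigroup unchanged. The only point deserving a word of care is bookkeeping the indices so that one really checks $\Delta_{i+1}\supseteq\Delta_i$ as ordered sequences with the leading entry inserted, i.e.\ that the order of refinement is exactly $1$ and not more; this is immediate from the shape $(\alpha\delta_0^{(i)},\delta_0^{(i)},\delta_1^{(i)},\dots,\delta_{g_i}^{(i)})$. I would close by remarking, as the text preceding the statement announces, that this shows the refinement procedure can fail to terminate \emph{only} through repeated use of operation (a), since operations (b) and (c) each strictly consume prime factors of $\delta_0$ (cf.\ the corollary bounding $g$ by the number of prime factors of $\delta_0$) and hence can be applied only finitely often.
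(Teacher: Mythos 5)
Your proposal is correct and follows exactly the paper's route: the paper's proof is the one-line observation that the claim follows from Theorem \ref{thm:introducir_beta}(a), and your argument is precisely the iterated application of that item (prepending $\alpha\delta_0^{(i)}$ with a fixed $\alpha>1$), with the bookkeeping of refinement order and semigroup equality spelled out. Nothing is missing; you have simply written out the details the paper leaves implicit.
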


\begin{proof}
If follows from Theorem \ref{thm:introducir_beta} (a).
%To prove the first assertion it suffices to consider recursively the sequence in $\mathbb{N}_{>0}$ $\Delta(\alpha)= (\delta_0'=\alpha \delta_0, \delta_1'=\delta_0, \delta_2'=\delta_1, \ldots, \delta_{g+1}'=\delta_g)$ and Theorem \ref{thm:introducir_beta} (a).
%is easily checked from the fact that the set $\Delta(\alpha)= (\delta_0'=\alpha \delta_0, \delta_1'=\delta_0, \delta_2'=\delta_1, \ldots, \delta_{g+1}'=\delta_g)$ is a $\delta$-sequence in $\mathbb{N}_{>0}$ generating the same semigroup as $\Delta$, for an integer $\alpha>1$. Notice that $d_i'=d_{i-1}$ for every $i=2,\ldots , g+2$, $n_1'=\alpha>1$ and $n_i'=n_{i-1}$ for every $i=2,\ldots , g+1$. With this in mind, the checking of the properties (1), (2) and (3) in the definition of $\delta$-sequence follows straightforward; this shows the statement.
%Finally, it is easy to check that the second assertion follows from fixing a sequence of $\delta$-sequences in $\mathbb{N}_{>0}$ generating the same semigroup such that $\Delta_0 := \Delta$ and
%$$
%|\Delta_{i+1}\setminus \Delta_i|=1 \ \ \mbox{for} \ i \geq 1.
%$$
%Indeed, we only need to set arbitrary positive integers $\{\alpha_i\}_{i\geq 1}$ and define the $\delta$-sequence $\Delta_i := \Delta(\alpha_1)(\alpha_2) \cdots (\alpha_i)$ in the fashion showed above.
\end{proof}

Corollary \ref{theo:nested} does not hold if prepending multiples of $\delta_0$ is disallowed.
%To do so, we will use different $\delta$-sequences $\Delta$ and their corresponding values $\delta_i, d_i, n_i$. To express that these values are attached to $\Delta$, we will append this as notation; in this way, we write e.g. $\delta_i (\Delta)$ instead of $\delta_i$, etc.

\begin{co} \label{t23}
Let $\Delta := \Delta_0$ be a $\delta$-sequence. Then, there exist finitely many nested families of $\delta$-sequences $\mathcal{D}:=\{\Delta_i\}_{i \geq 0}$ such that $S_\Delta = S_{\Delta_i}$ for all $i\geq 0$ and $\delta_0 (\Delta_{i+1})$ is not a multiple of $\delta_0 (\Delta_i)$. Furthermore, the cardinality of any such family $\mathcal{D}$ is finite.
\end{co}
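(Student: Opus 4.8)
The plan is to combine two finiteness facts: (i) for a fixed $\delta$-semigroup $S$ there are only finitely many $\delta$-sequences generating $S$ once multiples of the first element are excluded, and (ii) a nested family that never multiplies the first element is strictly increasing in length and therefore must stabilize. The key observation is that, along a nested family $\mathcal{D} = \{\Delta_i\}_{i\geq 0}$ with $\delta_0(\Delta_{i+1})$ not a multiple of $\delta_0(\Delta_i)$, the first element $\delta_0(\Delta_i)$ cannot grow by the mechanism of Item (a) of Theorem \ref{thm:introducir_beta}; hence each step $\Delta_i \to \Delta_{i+1}$ is a refinement of order one of type (b) or (c) in that theorem. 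In case (c) the first element is unchanged, and in case (b) the new first element $\beta$ satisfies $\delta_0(\Delta_{i+1}) = \prod_k n_k(\Delta_{i+1})$, which by Condition 2 of Theorem \ref{thm:introducir_beta}(b) divides $\delta_0(\Delta_i)$ strictly (the argument in the proof of (b): if every $d(n_k) = n_k$ then $\delta_0'$ is a multiple of $\delta_0$, contradiction, so $\delta_0' \mid \delta_0$ properly). Therefore $\delta_0(\Delta_i)$ is non-increasing and, being a positive integer, eventually constant; after finitely many steps only refinements of type (c) can occur, and these fix $\delta_0$.

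First I would make precise the bound on the number of $\delta$-sequences generating a fixed semigroup $S$ with first element bounded by a fixed integer. By Corollary \ref{c21} there are finitely many $\delta$-sequences with any prescribed first element $\delta_0$; since in a nested family the sequence of first elements $\delta_0(\Delta_i)$ only takes finitely many values (it is non-increasing, bounded below by $1$, starting from $\delta_0(\Delta_0)$), the set $\{\delta_0(\Delta_i) : i\geq 0\}$ is finite, and hence the set of all $\delta$-sequences that can appear as some $\Delta_i$ in some such family is finite — it is a subset of the (finite) union, over the finitely many admissible first elements $d$, of the $\delta$-sequences with first element $d$ generating $S$. Call this finite set $\mathcal{T}$. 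Each nested family $\mathcal{D}$ is a sequence of elements of $\mathcal{T}$ in which consecutive terms are related by order-one refinement, in particular $\#\Delta_{i+1} = \#\Delta_i + 1$. Since cardinalities strictly increase and are bounded above by $\max\{\#\Delta : \Delta \in \mathcal{T}\}$, every such family $\mathcal{D}$ is finite; this proves the last sentence of the statement. Moreover a nested family is determined by its sequence of terms, each term lies in $\mathcal{T}$, and there are only finitely many strictly-length-increasing sequences in the finite set $\mathcal{T}$, so there are finitely many families $\mathcal{D}$; this proves the first assertion.

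The main obstacle I anticipate is justifying cleanly that $\delta_0(\Delta_i)$ is non-increasing along the family — i.e.\ that the only refinement of order one which can strictly increase the first element is the type-(a) move $\beta = \alpha\delta_0$, which is precisely the move excluded by hypothesis. This requires checking that a refinement $\Delta \to \Delta'$ of order one falls into exactly one of the three cases of Theorem \ref{thm:introducir_beta}: either the new element is inserted strictly inside (case (c), first element unchanged), or it is prepended (cases (a) and (b)), and in the prepended non-principal case (b) one reads off from Condition 2 that $\delta_0(\Delta') \mid \delta_0(\Delta)$, with proper division exactly when $\delta_0(\Delta')$ is not a multiple of $\delta_0(\Delta)$ — which is our standing hypothesis, so in fact $\delta_0(\Delta')$ is a proper divisor of $\delta_0(\Delta)$. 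Once this trichotomy is in place the rest is the bookkeeping above. I would also remark that the hypothesis rules out type (a) at every step, so the possibility of the infinite nested family of Corollary \ref{theo:nested} does not arise, which is consistent with the sentence "Corollary \ref{theo:nested} does not hold if multiples of $\delta_0$ are not allowed" immediately preceding the statement.
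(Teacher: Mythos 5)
Your argument has a genuine gap: the central claim that $\delta_0(\Delta_i)$ is non-increasing along the family is false. You assert that in a type-(b) step the new first element $\beta=\delta_0(\Delta_{i+1})$ properly divides $\delta_0(\Delta_i)$, ``reading this off from Condition 2''. But what the proof of Theorem \ref{thm:introducir_beta}(b) actually yields is
$\delta_0'=d(a_j)\prod_{k=1}^{j}d(n_k)\prod_{k=j+1}^{g}n_k$, and the factor $d(a_j)$ (a divisor of $a_j$, which is coprime to $n_j$ and need not divide any $n_k$) destroys any divisibility relation with $\delta_0=\prod_{k=1}^{g}n_k$; the conclusion one can draw from ``some $d(n_{k_0})\neq n_{k_0}$'' is only that $\delta_0'$ is \emph{not a multiple} of $\delta_0$, not that it divides it. The paper's own illustration after Corollary \ref{t23} refutes your monotonicity: $\Delta_0=(375,135,102,283)$ refines to $\Delta_1=(675,375,135,102,283)$ and then to $\Delta_2=(1125,\ldots)$, with $675=9\cdot5\cdot5\cdot3\nmid 375=25\cdot5\cdot3$ and the first elements strictly \emph{increasing} $375<675<1125$, while each step satisfies the hypothesis (neither $675$ nor $1125$ is a multiple of its predecessor's first element). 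Since your bound on the set $\mathcal{T}$ of sequences that can occur, and hence both finiteness conclusions, rest entirely on the first elements being bounded by $\delta_0(\Delta_0)$ via this monotonicity, the proof collapses.

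The paper terminates the process by a different mechanism: by Theorem \ref{thm:introducir_beta}(b), each prepending step forces $\delta_0(\Delta_{i+1})=d(n_j(\Delta_i))\,\delta_j(\Delta_i)$ for some $j>0$ and some divisor $d(n_j(\Delta_i))>1$ of $n_j(\Delta_i)$ --- giving finitely many choices at each step --- and, crucially, forces some entry $n_{k+1}(\Delta_{i+1})=d(n_k(\Delta_i))$ of the new $n$-sequence to be a \emph{proper} divisor of $n_k(\Delta_i)$. Since the $n$-sequence entries are integers greater than $1$, this strict descent of divisors cannot be iterated indefinitely, which bounds the length of any family; the families whose first element never changes are handled separately via Corollary \ref{c21}. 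If you want to repair your write-up, you should replace the (false) monotone-decrease argument on $\delta_0(\Delta_i)$ by this descent on the $n$-sequences.
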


\begin{proof}
%Let $\Delta = (\delta_0(\Delta), \delta_1(\Delta), \ldots, \delta_g(\Delta))$. By Corollary \ref{c21}, the number of $\delta$-sequences  whose first element is $\delta_0(\Delta)$ is finite. Consequently, there are only finitely many families $\mathcal{D}$ as in the statement where every sequence $\Delta_i$ shares the same first element. Moreover, the cardinality of any of these sequences $\mathcal{D}$ is also finite. To prove the result, it suffices to see that the number of possible values $\delta_0$ of all the $\delta$-sequences in any $\mathcal{D}$ is finite. By hypothesis, any  $\Delta_1$ is a refinement of order one of  $\Delta$, where the first element $\delta_0 (\Delta_1)$ in $\Delta_1$ is different from $\delta_0 (\Delta)$. By Theorem \ref{thm:introducir_beta} (b), it must hold that
%$$
%\delta_0(\Delta_1)=n_{j+1}(\Delta_1)\delta_{j+1}(\Delta_1)=d(n_j(\Delta))\delta_j(\Delta), \text{ for some }j>0,
%$$
%where, $d(n_j(\Delta))$ is a divisor of $n_j(\Delta)$ greater than $1$.

Let $\Delta = (\delta_0(\Delta), \delta_1(\Delta), \ldots, \delta_g(\Delta))$. By Corollary \ref{c21}, there are only finitely many $\delta$-sequences whose first element is $\delta_0(\Delta)$. Consequently, there are only finitely many families $\mathcal{D}$ as in the statement where every sequence $\Delta_i$ shares the same initial element. Furthermore, the cardinality of each such family $\mathcal{D}$ is finite. To prove the result, it suffices to show that the set of possible values for $\delta_0$ across all $\delta$-sequences in any $\mathcal{D}$ is finite. Let $\Delta_1$ be any refinement of order one of $\Delta$ which satisfies $\delta_0(\Delta_1) \neq \delta_0(\Delta)$. By Theorem \ref{thm:introducir_beta} (b), it must hold that:
\[
\delta_0(\Delta_1) = n_{j+1}(\Delta_1)\delta_{j+1}(\Delta_1) = d(n_j(\Delta))\delta_j(\Delta) \quad \text{for some } j > 0,
\]
where $d(n_j(\Delta))$ is a divisor of $n_j(\Delta)$ strictly greater than $1$. As a consequence, we have finitely many choices for $\delta_0(\Delta_1)$. Any refinement $\Delta_2$ of order one of $\Delta_1$ satisfies $\delta_0 (\Delta_2) = d(n_j(\Delta_1))\delta_j(\Delta_1)$, where $d(n_j(\Delta_1))$ is a divisor of $n_j(\Delta_1)$ greater than $1$. Furthermore, by Theorem \ref{thm:introducir_beta} (b), when considering the $n$-sequence of $\Delta_2$ for $j > 1$, some value $n_{k+1}(\Delta_2)$ must be a proper divisor of $n_k(\Delta_1)$ for $0 < k \leq j$; additionally, the newly introduced term $n_1(\Delta_2)$ is a proper divisor of $a_j(\Delta_1)$, where $a_j(\Delta_1)$ is the positive integer satisfying:
\[
\delta_j(\Delta_1) = a_j(\Delta_1) \prod_{\ell=j+1}^{g(\Delta_1)} n_\ell(\Delta_1).
\]
If $j = 1$, then the value $n_1(\Delta_2) > 1$ is a divisor of $a_1(\Delta_1) < n_1(\Delta_1)$. Consequently, the number of possible distinct values for $\delta_0$ in the $\delta$-sequences $\Delta_i$ is finite. Therefore, an infinite refinement as described in the statement is impossible, which concludes the proof.

%Any refinement $\Delta_2$ of order one of $\Delta_1$ satisfies $\delta_0 (\Delta_ 2) = d(n_j(\Delta_1))\delta_j(\Delta_1)$, where $d(n_j(\Delta_1))$ is a divisor of $n_j(\Delta_1)$ greater than $1$. In addition, by Theorem \ref{thm:introducir_beta} (b), when considering the $n$-sequence of $\Delta_2$ and $j>1$, some value $n_{k+1}(\Delta_2)$ must be a proper divisor of $n_k(\Delta_1)$, $ 0<k\leq j$, and the newcomer $n_1(\Delta_2)$ is a proper divisor of $a_j(\Delta_1),$ where $a_j(\Delta_1)$ is the positive integer which satisfies $\delta_j(\Delta_1)=a_j(\Delta_1)\prod_{\ell=j+1}^{g(\Delta_1)}n_\ell(\Delta_1)$. If $j=1$, then the value $n_1(\Delta_2)>1$ is a divisor of $a_1(\Delta_1)<n_1(\Delta_1)$. Therefore, the number of possible (different) values $\delta_0$ in the $\delta$-sequences $\Delta_i$ is finite and thus, we cannot get any infinite refinement as described in the statement which concludes the proof.
\end{proof}

To illustrate Corollary \ref{t23}, consider $\Delta=\Delta_0=(375,135,102,283)$. Then we can prepend $\beta=675$ to obtain the $\delta$-sequence
$$
\Delta_1=(675,375,135,102,283),
$$
which obviously has the same semigroup as $\Delta_0$ since $675=5\cdot 135$.  Next, we can prepend $1125$ to the new sequence to get the $\delta$-sequence
$$
\Delta_2=(1125,675,375,135,102,283).
$$
Clearly $S_{\Delta_2}=S_{\Delta_1}=S_{\Delta}$ since $1125=3\cdot 375$. At this stage, the procedure terminates because we cannot add any element at the beginning since the $n$-sequence of $\Delta_2$ is $n(\Delta_2)=(5,3,5,5,3)$ and it consists only of prime numbers, cf. Theorem \ref{thm:introducir_beta} (b). The fact that the number of $\delta$-sequences starting with $375, 675$ and $1125$ is finite proves the existence of finitely many $\delta$-sequences with the same semigroup without adding multiples of $\delta_0$ at the beginning.
\medskip

We conclude this paper with an example that illustrates the relationship between $\delta$-sequences, their refinements and the extremal rays of the cone of curves for the associated surfaces at infinity.
\begin{example} \label{ejemplo57}
\vspace{1mm}

Consider the $\delta$-sequence $\Delta = \{12, 8, 9\}$  from Example \ref{ex:new} and %apply  Theorem \ref{thm:gluing_delta} with $\alpha=15$ and $\beta=16$
the gluing of $\Delta$ and $T$ through $\alpha=15$ and $\beta=16$. This yields the MG$\delta$-sequence $$\Delta_1=(180,120,135,16).$$ By performing refinements as in Definition \ref{25}, we can determine families of $\delta$-sequences that correspond to surfaces at infinity whose $\delta$-semigroup remains $S_{\Delta_1}$. The structure and intersection products of the generators of the  cones of curves for these increasingly intricate obtained surfaces can be described using the methods showed in Example \ref{ex:new}. A representative family of such sequences is
\begin{equation*}
    \begin{array}{c}
       \mathcal{T}=\Big\{\Delta_1 = (180,120,135,16), \Delta_2 = (180,120,135,48,16), \\\Delta_3 = (180,120,135,80,16),
         \Delta_4 = (180,120,270,135,16), \\ \Delta_5 = (180,120,270,135,48,16),\Delta_6 = (180,120,270,135,80,16)\Big\}.
    \end{array}
\end{equation*}

Each $\delta$-sequence $\Delta_i$, $1 \leq i \leq 6$, consists of $g_i+1$ elements. The corresponding values for $n_{g_i}$ are $n_{g_1} = 15$, $n_{g_2} = 3$, $n_{g_3} = 5$, $n_{g_4} = 15 $, $n_{g_5} = 3$ and $n_{g_6} = 5$. Following the procedure described at the end of Subsection \ref{sect:curves}, the associated Puiseux exponents are:
\begin{align*}
& \mathcal{P}_{\Delta_1} = \big\{1, 405/60,539/15\big\},\\
&\mathcal{P}_{\Delta_2} = \big\{1, 405/60,507/15,227/3 \big\},\\
&\mathcal{P}_{\Delta_3} =  \big\{1,405/60,475/15,229/5 \big\},  \\
&\mathcal{P}_{\Delta_4} = \big\{1, 270/60,435/30,269/15 \big\}, \\
&\mathcal{P}_{\Delta_5} = \big\{1, 270/60,435/30,237/15,227/3 \big\}, \text{ and } \\
&\mathcal{P}_{\Delta_6} =  \big\{1, 270/60,435/30,205/15,229/5 \big\}.
\end{align*}

By appending a final element $\delta_{g_i +1}$ such that $0 \leq \delta_{g_i+1} \leq n_{g_i} \delta_{g_i}$, $1 \leq i \leq 6$, each $\delta$-sequence $\Delta_i$ defines a finite collection of $\delta$-sequences of type A. Each type A $\delta$-sequence determines the dual graph $\Gamma$ of a sequence $\pi$ of blowups at a configuration of infinitely near points over $\gp^2$, giving rise to a surface at infinity $X$. The graph $\Gamma$ reveals the structure and intersection products of the exceptional divisors whose classes generate the cone NE$(X)$. This graph can be computed from the corresponding sequence $\mathcal{P}_{\Delta_i}$ and the value $\delta_{g_i +1}$, where the latter determines the number of final free points in $\Gamma$.

As previously explained, each $\delta$-sequence $\Delta_i$ allows for the explicit calculation of the intersection matrix of the classes that generate the cone of curves for the corresponding surface $X$.
\end{example}

\section*{Acknowledgements}
The third author would like to thank the Department of Algebra, Analysis, Geometry and Topology, and IMUVa of Valladolid University for the support received when preparing this article.

%%%%%%%%%%%%%%%%%%%%%%%%%%%%%%%%%%%%%%%%%%%%%%%%%%%%%%%%%%%%%%%%%%%
%%%%%%%%%%%%%%%%%%%%%%%%%%%% Bibliografía %%%%%%%%%%%%%%%%%%%%%%%%%
%%%%%%%%%%%%%%%%%%%%%%%%%%%%%%%%%%%%%%%%%%%%%%%%%%%%%%%%%%%%%%%%%%%
\bibliographystyle{plain}
\bibliography{BIBLIO_paquete1}

\end{document}